\newtheorem{theorem}{Theorem}[section]
\newtheorem{proposition}[theorem]{Proposition}
\newtheorem{corollary}[theorem]{Corollary}
\newtheoremstyle{example}% name
  {6pt}%      Space above
  {6pt}%      Space below
  {}%         Body font
  {}%         Indent amount (empty = no indent, \parindent = para indent)
  {\bfseries}% Thm head font
  {.}%        Punctuation after thm head
  {.5em}%     Space after thm head: " " = normal interword space;
\theoremstyle{example}
\newtheorem{example}[theorem]{Example}
\newtheoremstyle{definition}% name
  {6pt}%      Space above
  {6pt}%      Space below
  {}%         Body font
  {}%         Indent amount (empty = no indent, \parindent = para indent)
  {\bfseries}% Thm head font
  {.}%        Punctuation after thm head
  {.5em}%     Space after thm head: " " = normal interword space;
\theoremstyle{definition}
\newtheorem{definition}[theorem]{Definition}
\newtheoremstyle{remark}% name
  {6pt}%      Space above
  {6pt}%      Space below
  {}%         Body font
  {}%         Indent amount (empty = no indent, \parindent = para indent)
  {\bfseries}% Thm head font
  {.}%        Punctuation after thm head
  {.5em}%     Space after thm head: " " = normal interword space;
\theoremstyle{remark}
\newtheorem{remark}[theorem]{Remark}
\newtheoremstyle{note}% name
  {6pt}%      Space above
  {6pt}%      Space below
  {}%         Body font
  {}%         Indent amount (empty = no indent, \parindent = para indent)
  {\bfseries}% Thm head font
  {.}%        Punctuation after thm head
  {.5em}%     Space after thm head: " " = normal interword space;
\theoremstyle{note}
\newtheorem{note}[theorem]{Note}
\renewcommand\@makefntext[1]{%
\setlength\parindent{1em}%
\noindent \makebox[1.8em][r]{}{#1}} \makeatother
\begin{document}
\parskip 4pt
\large \setlength{\baselineskip}{15 truept}
\setlength{\oddsidemargin} {0.5in} \overfullrule=0mm
\def\bfh{\vhtimeb}
\date{}
\title{\bf \large The Recursion Formula for Mixed Multiplicities\\ with respect to Joint Reductions
}
\def\b{\vntime}
\author{
\begin{tabular}{ll}
 Duong Quoc Viet  \\
\small vduong99@gmail.com   \\
\end{tabular}\\
\small Department of Mathematics, Hanoi National University of Education\\
\small 136 Xuan Thuy street, Hanoi, Vietnam\\
}
 \date{}
\maketitle \centerline{
\parbox[c]{11.5cm}{
\small{\bf ABSTRACT:} This paper gives the recursion formula for mixed multiplicities of maximal degrees with respect to joint reductions of ideals, which is one of important results in the mixed multiplicity theory.
Using this result, we give consequences on the relationship between mixed multiplicities and the Hilbert-Samuel multiplicity
under most essential assumptions, that
is a substantial progress in the problem of expressing mixed
multiplicities into the Hilbert-Samuel multiplicity.
 }}

 \section{Introduction} \noindent

\footnotetext{\begin{itemize} \item[ ]{\bf Mathematics Subject
Classification (2010):} Primary 13H15. Secondary 13C15, 13D40,
14C17.  \item[ ]{\bf  Key words and phrases:} Mixed multiplicity,
 Hilbert-Samuel multiplicity,  joint reduction.
\end{itemize}}
\noindent
Let $(A, \frak{m})$ be a Noetherian local ring  with maximal ideal
$\mathfrak{m}$ and infinite residue field $k = A/\mathfrak{m}.$  Let $M$
be a finitely generated $A$-module. Let $J$ be an $\frak
m$-primary ideal, $ I_1,\ldots, I_d$ be ideals of $A.$ Put  $I
= I_1\cdots I_d;$ $\overline {M}= M/0_M: I^\infty;$ $q=\dim \overline {M}$ and
\begin{align*}
&{\bf n} =(n_1,\ldots,n_d);{\bf k} = (k_1,\ldots,k_d); {\bf
0}=(0,\ldots,0);
\\
&\mathbf{e}_i = (0, \ldots,  \stackrel{(i)}{1},  \ldots, 0)\in  \mathbb{N}^{d};|{\bf k}| = k_1 + \cdots + k_d;\\
& \mathrm{\bf I}= I_1,\ldots,I_d;  \mathrm{\bf I}^{[\mathrm{\bf
k}]}= I_1^{[k_1]},
  \ldots,I_d^{[k_d]};  \mathbb{I}^{\mathrm{\bf n}}= I_1^{n_1}\cdots I_d^{n_d}.
\end{align*}
 Assign the dimension $-\infty$ to the module $0$ and
the degree $-\infty$ to the zero polynomial. And note that $\dim \overline {M} \not = 0$ by Remark \ref{no4.3a} (iii).
Then by
\cite[Proposition 3.1]{Vi} (see \cite{MV}),
$\ell_A\Big(\dfrac{J^{n_0}\mathbb{I}^{\bf
n}M}{J^{n_0+1}\mathbb{I}^{\bf n}M}\Big)$ is a  polynomial of total
degree $q-1$ for all large enough $n_0, \bf n.$ Denote by $P(n_0, {\bf n}, J,  \mathbf{I}, M)$ this
polynomial.
And one  can
write
$$P(n_0, {\bf n}, J,  \mathbf{I}, M)= \sum_{(k_0,\; \mathbf{k})\in \mathbb{N}^{d+1}
}e(J^{[k_0+1]}, \mathbf{I}^{[\mathbf{k}]}; M)\binom{n_0 +
k_0}{k_0}\binom{\mathbf{n}+\mathbf{k}}{\mathbf{k}},$$
 where
$\binom{\mathbf{n + k}}{\bf n}= \binom{n_1 + k_1}{n_1}\cdots
\binom{n_d + k_d}{n_d}.$ Recall that the original mixed
multiplicity theory studied the mixed multiplicities $e(J^{[k_0+1]},
\mathbf{I}^{[\mathbf{k}]}; M)$ concerning the terms of highest total degree $k_0 + |\mathbf{k}| = q - 1.$
In past years, this theory has been continually developed and obtained interesting results (see e.g. [3$-$10, 13$-$35]).
Recently, in \cite{htv}, one considered a larger class than the class of original
mixed multiplicities which concerns the terms of maximal degrees in the Hilbert
polynomial, that $e(J^{[k_0+1]},
\mathbf{I}^{[\mathbf{k}]}; M)$ is  the {\it  mixed
multiplicity of
 maximal degrees of $M$ with respect   to  ideals  $J,\mathrm{\bf I}$ of the type
$(k_0+1,\mathrm{\bf k})$} if $e(J^{[h_0+1]},
\mathbf{I}^{[\mathbf{h}]}; M)=0$ for all $(h_0, \mathbf{h}) >
(k_0,\mathbf{k}).$
And a natural and pleasant picture of mixed multiplicities of maximal degrees seems to have been shown in
    \cite{htv} and \cite{TV4}. Moreover,
 \cite[Proposition 2.7]{TV4} (see \cite[Remark 4.3]{TV4}) showed that
 the mixed
multiplicity of  maximal degrees of
the type $(k_0+1,\mathrm{\bf k})$ is defined if and only if
there exists  a joint
reduction of the type
$(\mathrm{\bf k}, k_0+1).$

The concept of joint reductions gave by Rees in \cite{Re} for $\mathfrak{m}$-primary ideals,
which was extended to arbitrary ideals in \cite{Oc,{SH}, Vi2,  Vi4,  VDT,
{VT4}}.

\begin{definition}[{Definition \ref{de01}}]
Let $\frak I_i$ be a sequence  consisting $k_i$ elements of $I_i$
for all $1 \le i \le d.$
 Put  ${\bf x} = \frak I_1, \ldots, \frak
 I_d$  and $(\emptyset) = 0_A$. Then ${\bf x}$ is called a {\it joint
reduction} of $\mathbf I$ with respect to $M$ of the type ${\bf
k}=(k_1,\ldots,k_d)$ if $\mathbb{I}^{\mathbf{n} }M =
\sum_{i=1}^d(\frak I_i) \mathbb{I}^{\mathbf{n} - \mathbf{e}_i}M$
for all large $\bf n.$  If $d=1$ then $(\frak I_1)$
is called a {\it reduction} of $I_1$ with respect to $M$
\cite{NR}.
\end{definition}

The first purpose of this paper is to give the recursion formula for mixed multiplicities of maximal degrees
in a form convenient for applications. This is one of motivations to lead us to the following recursion formula for mixed multiplicities of maximal degrees of modules with respect to joint reductions of ideals.

\begin{theorem}[{Theorem \ref{le2020}}]\label{thm1.1}
 Let ${\bf x}= x_1, \ldots, x_n$ be  a joint reduction  of
$\mathbf{I}, J$ with respect to $M$
 of the type $({\bf
k},k_0+1).$ Assume that $k_i > 0$ and $x_1 \in I_i$ for $1 \le i \le d.$ Then  $e(J^{[k_0+1]}, \mathbf{I}^{[\mathbf{k}]}; M) =
e(J^{[k_0+1]}, \mathbf{I}^{[\mathrm{\bf k} - \mathbf{e}_i]};
 M/x_1M) - e(J^{[k_0+1]}, \mathbf{I}^{[\mathrm{\bf k} - \mathbf{e}_i]};
 0_M: x_1).$
\end{theorem}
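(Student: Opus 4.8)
The plan is to pass to the quotient $M/x_1M$ and to the submodule $0_M:x_1$ and to compare the relevant Hilbert polynomials term by term. Set $\overline M = M/0_M:I^\infty$ with $q = \dim\overline M$, and recall from \cite[Proposition 3.1]{Vi} that $\ell_A\big(J^{n_0}\mathbb I^{\bf n}M/J^{n_0+1}\mathbb I^{\bf n}M\big)$ agrees with the polynomial $P(n_0,{\bf n},J,\mathbf I,M)$ of total degree $q-1$ for $n_0,{\bf n}$ large, and the coefficient of $\binom{n_0+k_0}{k_0}\binom{\mathbf n+\mathbf k}{\mathbf k}$ in $P$ is $e(J^{[k_0+1]},\mathbf I^{[\mathbf k]};M)$. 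Since $k_i>0$, the term $e(J^{[k_0+1]},\mathbf I^{[\mathbf k]};M)$ multiplies a monomial that involves the variable $n_i$; the point is that differencing $P$ in the variable $n_i$ picks out exactly this coefficient at the maximal-degree level.

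First I would establish the exact sequences that link the three modules. Multiplication by $x_1\in I_i$ gives, for every $n_0$ and every $\bf n$, an exact sequence
\[
0 \longrightarrow \frac{(0_M:x_1)\cap J^{n_0}\mathbb I^{\bf n}M}{(0_M:x_1)\cap J^{n_0}\mathbb I^{{\bf n}-\mathbf e_i}M}\;(\ast)\longrightarrow \frac{J^{n_0}\mathbb I^{{\bf n}-\mathbf e_i}M}{J^{n_0}\mathbb I^{\bf n}M+x_1 J^{n_0}\mathbb I^{{\bf n}-\mathbf e_i}M}\cdot x_1 \longrightarrow \cdots
\]
together with the evident quotient sequence relating $J^{n_0}\mathbb I^{\bf n}M$ inside $J^{n_0}\mathbb I^{\bf n}(M/x_1M)$ to $x_1M\cap J^{n_0}\mathbb I^{\bf n}M$. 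Concretely, I would compare
\[
\ell_A\!\left(\frac{J^{n_0}\mathbb I^{{\bf n}-\mathbf e_i}M}{J^{n_0+1}\mathbb I^{{\bf n}-\mathbf e_i}M}\right)
-
\ell_A\!\left(\frac{J^{n_0}\mathbb I^{{\bf n}}M}{J^{n_0+1}\mathbb I^{{\bf n}}M}\right)
\]
with the corresponding alternating sum of lengths for $M/x_1M$ and $0_M:x_1$, using that $x_1$ kills $0_M:x_1$ and using the joint-reduction hypothesis on $\bf x$ to control the ``error'' module $J^{n_0}\mathbb I^{\bf n}M + x_1 J^{n_0}\mathbb I^{{\bf n}-\mathbf e_i}M$ versus $J^{n_0}\mathbb I^{{\bf n}-\mathbf e_i}M$ for large $n_0,{\bf n}$ (this is where the fact that $x_1$ is part of a joint reduction of the type $({\bf k},k_0+1)$, combined with $x_1\in I_i$, is used to bound the dimension of the discrepancy below $q-1$, so it contributes nothing to the top-degree coefficient).

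Next, translating the length identity into polynomial identities, I would obtain
\[
P(n_0,{\bf n}-\mathbf e_i,J,\mathbf I,M) - P(n_0,{\bf n},J,\mathbf I,M)
= P(n_0,{\bf n}-\mathbf e_i,J,\mathbf I,M/x_1M) - P(n_0,{\bf n}-\mathbf e_i,J,\mathbf I,0_M:x_1) + (\text{lower total degree}).
\]
The left-hand side is the first finite difference of $P(\,\cdot\,,J,\mathbf I,M)$ in the $i$-th variable; applying the binomial identity $\binom{n_i+k_i}{k_i}-\binom{n_i-1+k_i}{k_i}=\binom{n_i-1+k_i}{k_i-1}$ shows that the coefficient of $\binom{n_0+k_0}{k_0}\binom{\mathbf n-\mathbf e_i+\mathbf k-\mathbf e_i}{\mathbf k - \mathbf e_i}$ on the left is exactly $e(J^{[k_0+1]},\mathbf I^{[\mathbf k]};M)$, while on the right it is $e(J^{[k_0+1]},\mathbf I^{[\mathbf k-\mathbf e_i]};M/x_1M) - e(J^{[k_0+1]},\mathbf I^{[\mathbf k-\mathbf e_i]};0_M:x_1)$. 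Equating these gives the claimed recursion. The main obstacle I anticipate is the bookkeeping in the second paragraph: showing that the ``error'' modules coming from $x_1M\cap J^{n_0}\mathbb I^{\bf n}M$ versus $x_1 J^{n_0}\mathbb I^{{\bf n}-\mathbf e_i}M$ have Hilbert function of total degree $\le q-2$, so that they do not perturb the degree-$(q-1)$ part of $P$. This is exactly the content one expects a joint reduction to provide — via an Artin–Rees type argument or the defining equality $\mathbb I^{\bf n}M = \sum (\frak I_i)\mathbb I^{{\bf n}-\mathbf e_i}M$ for large $\bf n$ — but verifying it carefully, uniformly in $n_0$ and $\bf n$, is the delicate step; everything else is the standard binomial-coefficient and exact-sequence manipulation.
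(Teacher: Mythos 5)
Your approach is genuinely different from the paper's: you work directly at the level of the Hilbert polynomial $P(n_0,\mathbf n,J,\mathbf I,M)$, whereas the paper routes everything through the Euler--Poincar\'{e} characteristic $\chi$ of the Koszul complex on a Rees module (Propositions~\ref{lem 3.4} and~\ref{pru2020}) and only at the end translates back to mixed multiplicities via $\chi(\mathbf x,J,\mathbf I,M)=\bigtriangleup^{(k_0,\mathbf k)}P$. The payoff of the paper's route is that the ``error'' modules are handled homologically (additivity of $\chi$ on exact sequences, plus the fact that $\chi$ vanishes on modules annihilated by a power of $x_1$ or by a power of $I$), which sidesteps exactly the degree bookkeeping you flag as delicate.

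There is, however, a genuine gap in your sketch, and it is not just a matter of being careful. You need the error polynomial coming from comparing $x_1M\cap J^{n_0}\mathbb I^{\mathbf n}M$ with $x_1J^{n_0}\mathbb I^{\mathbf n-\mathbf e_i}M$ (and similarly for $0_M:x_1$) to vanish after applying $\bigtriangleup^{(k_0,\mathbf k-\mathbf e_i)}$, not merely to have total degree $\le q-2$. These are different conditions: the theorem allows the type $(k_0,\mathbf k)$ to satisfy $k_0+|\mathbf k|<q-1$, because the partial order on $\mathbb N^{d+1}$ used in the definition of ``mixed multiplicity of maximal degrees'' is not total, so a degree-$(q-1)$ term at an incomparable index does not obstruct the definition. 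In that regime an error polynomial of total degree $q-2$ can still carry a nonzero coefficient at the position $(k_0,\mathbf k-\mathbf e_i)$, and your ``lower total degree'' estimate does not kill it. The paper's Proposition~\ref{pru2020} gets the exact vanishing by a prime-filtration argument on the Rees-module error term $F$, showing $I\subset\sqrt{\mathrm{Ann}_{\frak R(\mathbf I;A)}F}$ and then concluding $\chi(n_0,\mathbf n,\mathbf x'_T,J,F)=0$ because each $\frak R(\mathbf I;A)/P_i$ is eventually zero in the relevant degrees. That specific mechanism --- vanishing in high multidegrees rather than low total degree --- is what you would need to reproduce, and it is not an Artin--Rees estimate alone.

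Two smaller points. First, before matching coefficients you need to know the right-hand side is meaningful: that $e(J^{[k_0+1]},\mathbf I^{[\mathbf k-\mathbf e_i]};M/x_1M)$ and $e(J^{[k_0+1]},\mathbf I^{[\mathbf k-\mathbf e_i]};0_M:x_1)$ are defined as mixed multiplicities of maximal degrees; the paper gets this for free from the observation that $x_2,\ldots,x_n$ is a joint reduction of the correct type for both modules (via \cite[Corollary~2.8]{TV4} for $0_M:x_1$). Second, matching only the single coefficient of $\binom{n_0+k_0}{k_0}\binom{\mathbf n+\mathbf k-\mathbf e_i}{\mathbf k-\mathbf e_i}$ in a one-variable difference is not logically sufficient: the identification $e(J^{[k_0+1]},\mathbf I^{[\mathbf k]};M)=\bigtriangleup^{(k_0,\mathbf k)}P$ used in the paper (equation~(\ref{vv25})) is a statement about the full difference operator being constant, and you should phrase the argument at that level to avoid picking up contributions from other monomials of the same total degree.
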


Perhaps Theorem \ref{thm1.1} is one of expected results in studying mixed multiplicities.
And one of the interesting consequences of this theorem is the following.
\begin{corollary} [{Corollary \ref{vt5/3}}]\label{co 1.4}
 Let ${\bf x}= x_1, \ldots, x_n$ be  a joint reduction  of
$\mathbf{I}, J$ with respect to $M$
 of the type $({\bf
k},k_0+1).$  Assume that $k_i > 0$ and $x_1 \in I_i$ for $1 \le i \le d.$ Then we have
\begin{itemize}
\item[$\mathrm{(i)}$]
$e(J^{[k_0+1]}, \mathbf{I}^{[\mathbf{k}]}; M)\le
e(J^{[k_0+1]}, \mathbf{I}^{[\mathrm{\bf k} - \mathbf{e}_i]};
 M/x_1M).$
  \item[$\mathrm{(ii)}$] $e(J^{[k_0+1]}, \mathbf{I}^{[\mathbf{k}]}; M)=
e(J^{[k_0+1]}, \mathbf{I}^{[\mathrm{\bf k} - \mathbf{e}_i]};
 M/x_1M)$  if $x_1$ is an $M$-regular element.
 \item[$\mathrm{(iii)}$] $e(J^{[k_0+1]}, \mathbf{I}^{[\mathbf{k}]}; M)=
e(J^{[k_0+1]}, \mathbf{I}^{[\mathrm{\bf k} - \mathbf{e}_i]};
 M/x_1M)$ if $x_1$ is an $I$-filter-regular element with respect to $M.$
\end{itemize}
\end{corollary}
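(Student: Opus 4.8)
The plan is to derive all of (i)--(iii) straight from the recursion formula of Theorem~\ref{thm1.1},
\[
e(J^{[k_0+1]},\mathbf{I}^{[\mathbf{k}]};M)=e(J^{[k_0+1]},\mathbf{I}^{[\mathbf{k}-\mathbf{e}_i]};M/x_1M)-\delta,\qquad \delta:=e(J^{[k_0+1]},\mathbf{I}^{[\mathbf{k}-\mathbf{e}_i]};0_M:x_1),
\]
so that the whole matter is to control the correction term $\delta$ (which is legitimate, since $k_i>0$ forces $\mathbf{k}-\mathbf{e}_i\in\mathbb{N}^d$). For (i) I would invoke the basic non-negativity of mixed multiplicities: every coefficient $e(J^{[k_0+1]},\mathbf{I}^{[\mathbf{k}]};N)$ appearing in the Hilbert polynomial $P(n_0,\mathbf{n},J,\mathbf{I},N)$ is a non-negative integer (this belongs to the general theory of mixed multiplicities of arbitrary ideals and of maximal degrees; see \cite{htv}, \cite{TV4}). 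In particular $\delta\ge 0$, and the displayed identity immediately gives $e(J^{[k_0+1]},\mathbf{I}^{[\mathbf{k}]};M)\le e(J^{[k_0+1]},\mathbf{I}^{[\mathbf{k}-\mathbf{e}_i]};M/x_1M)$, which is (i).

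For (ii), if $x_1$ is $M$-regular then $0_M:x_1=0$; the zero module is assigned dimension $-\infty$ and Hilbert polynomial the zero polynomial, so $\delta=0$ and Theorem~\ref{thm1.1} collapses to the asserted equality. For (iii), let $x_1$ be an $I$-filter-regular element with respect to $M$, i.e. $0_M:x_1\subseteq 0_M:I^{\infty}$ with $I=I_1\cdots I_d$; since $M$ is Noetherian, the finitely generated submodule $0_M:x_1$ then satisfies $I^{t}(0_M:x_1)=0$ for some $t\ge 1$. Consequently, whenever every entry of $\mathbf{n}$ is $\ge t$ we have $\mathbb{I}^{\mathbf{n}}(0_M:x_1)\subseteq I^{t}(0_M:x_1)=0$, and therefore $\ell_A\bigl(J^{n_0}\mathbb{I}^{\mathbf{n}}(0_M:x_1)/J^{n_0+1}\mathbb{I}^{\mathbf{n}}(0_M:x_1)\bigr)=0$ for all $n_0$ and all such $\mathbf{n}$; hence $P(n_0,\mathbf{n},J,\mathbf{I},0_M:x_1)$ is the zero polynomial, all of its coefficients vanish, and in particular $\delta=0$. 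Theorem~\ref{thm1.1} then yields the equality in (iii). Note that (ii) is recovered as the special case $0_M:x_1=0$, an $M$-regular element being trivially $I$-filter-regular with respect to $M$.

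The whole argument is a short, essentially formal deduction from Theorem~\ref{thm1.1}; the one place I would be careful is the ingredient used for (i) — the non-negativity of the mixed multiplicities $e(J^{[k_0+1]},\mathbf{I}^{[\mathbf{k}]};N)$ \emph{throughout} the range of maximal degrees, since the binomial-basis coefficients of a general Hilbert polynomial need not be non-negative, so this is a genuine input rather than a formality. With that in hand, parts (i), (ii), (iii) follow in the order above with no further computation.
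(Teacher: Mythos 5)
Your proof is correct and follows essentially the same route as the paper's: apply Theorem~\ref{thm1.1}, then observe that the correction term $\delta = e(J^{[k_0+1]},\mathbf{I}^{[\mathbf{k}-\mathbf{e}_i]};0_M:x_1)$ is nonnegative (being a mixed multiplicity of maximal degrees, via (\ref{vv221})), and vanishes when $0_M:x_1=0$ in case (ii), respectively when $I^s(0_M:x_1)=0$ forces the Hilbert polynomial $P(n_0,\mathbf{n},J,\mathbf{I},0_M:x_1)$ to be zero in case (iii). Your closing caution is well placed: the nonnegativity needed for (i) is precisely the one available for mixed multiplicities of maximal degrees in the sense of \cite{htv,TV4}, and it applies here because the proof of Theorem~\ref{thm1.1} already exhibits $x_2,\ldots,x_n$ as a joint reduction of $\mathbf{I},J$ with respect to $0_M:x_1$ of type $(\mathbf{k}-\mathbf{e}_i,k_0+1)$.
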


Note that the equation in Corollary \ref {co 1.4} (iii) plays a very important role in studying mixed multiplicities.
And to have this equation, one of the approaches
is to choose $x \in I_i$ such that $P(n_0, {\bf n}, J, \mathbf{I}, M/xM)=\bigtriangleup^{(0,\; \mathbf{e}_i)}P(n_0, {\bf n}, J,
\mathbf{I},  M).$ So one used different sequences. However, these sequences are both filter-regular sequences and Rees superficial sequences
and are also parts of joint reductions (see Remark \ref{no4.2a}).

Corollary \ref {co 1.4} immediately produces the following result on the relationship between mixed multiplicities and the Hilbert-Samuel multiplicity via joint reductions.
 \begin{corollary} [{Corollary \ref{vt15/7}}]\label{co 1.vv}  Let ${\bf x}= x_1, \ldots, x_n$ be  a joint reduction  of
$\mathbf{I}, J$ with respect to $M$
 of the type $({\bf
k},k_0+1)$ with
${\bf x}_{\mathbf{I}} =x_1,\ldots,x_{|{\bf k }|} \subset {\bf I};$ $U =x_{|{\bf k }|+1},\ldots,x_n \subset J.$
 Then we have
\begin{itemize}
\item[$\mathrm{(i)}$] $e(J^{[k_0 +1]}, \mathbf{I}^{[\mathbf{k}]}; M)
\le e(U; {M}/({\bf x}_{\mathbf{I}})M:I^\infty).$
\item[$\mathrm{(ii)}$] $e(J^{[k_0 +1]}, \mathbf{I}^{[\mathbf{k}]}; M)
= e(U; {M}/({\bf x}_{\mathbf{I}})M:I^\infty)$ if ${\bf x}_{\mathbf{I}}$ is an $I$-filter-regular sequence with respect to $M.$
\end{itemize}
\end{corollary}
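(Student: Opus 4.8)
The plan is to derive Corollary~\ref{co 1.vv} from Corollary~\ref{co 1.4} (and hence, ultimately, from the recursion formula of Theorem~\ref{thm1.1}) by peeling the elements of ${\bf x}_{\mathbf{I}}$ off one at a time, by induction on $|{\bf k}|$. The inductive step ($|{\bf k}|\ge 1$) rests on a formal observation about quotients. Write ${\bf x}=\frak I_1,\ldots,\frak I_d,\frak J$ with $\frak J=U$; since ${\bf x}_{\mathbf{I}}$ is listed first, $x_1$ lies in $\frak I_i$ for some $i$, so $x_1\in I_i$ and $k_i\ge 1$. Dividing the defining equality $J^{n_0}\mathbb{I}^{\bf n}M=\sum_{j=1}^{d}(\frak I_j)\,J^{n_0}\mathbb{I}^{{\bf n}-\mathbf{e}_j}M+U\,J^{n_0-1}\mathbb{I}^{\bf n}M$, valid for all large $n_0,{\bf n}$, by $x_1M$ shows that $x_2,\ldots,x_n$ is a joint reduction of $\mathbf{I}, J$ with respect to $M/x_1M$ of type $({\bf k}-\mathbf{e}_i,k_0+1)$, again with its $I$-part listed first and with the unchanged tail $U\subset J$; moreover, if ${\bf x}_{\mathbf{I}}$ is an $I$-filter-regular sequence with respect to $M$, then $x_2,\ldots,x_{|{\bf k}|}$ is an $I$-filter-regular sequence with respect to $M/x_1M$.

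For the inductive step itself, Corollary~\ref{co 1.4}(i) gives $e(J^{[k_0+1]},\mathbf{I}^{[{\bf k}]};M)\le e(J^{[k_0+1]},\mathbf{I}^{[{\bf k}-\mathbf{e}_i]};M/x_1M)$, and Corollary~\ref{co 1.4}(iii) turns this into an equality when ${\bf x}_{\mathbf{I}}$ is $I$-filter-regular, since then $x_1$ is an $I$-filter-regular element with respect to $M$. Applying the induction hypothesis (the statement of Corollary~\ref{co 1.vv} for $|{\bf k}|-1$) to the joint reduction $x_2,\ldots,x_n$ of $\mathbf{I}, J$ with respect to $M/x_1M$ of type $({\bf k}-\mathbf{e}_i,k_0+1)$, whose $I$-part is $x_2,\ldots,x_{|{\bf k}|}$ and whose tail is $U$, yields $e(J^{[k_0+1]},\mathbf{I}^{[{\bf k}-\mathbf{e}_i]};M/x_1M)\le e\bigl(U;\,(M/x_1M)/(x_2,\ldots,x_{|{\bf k}|})(M/x_1M):I^\infty\bigr)=e\bigl(U;\,M/({\bf x}_{\mathbf{I}})M:I^\infty\bigr)$, the last being a canonical identification since $x_1M\subseteq({\bf x}_{\mathbf{I}})M$; the estimate is an equality throughout when ${\bf x}_{\mathbf{I}}$ is $I$-filter-regular. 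Concatenating these two estimates proves the inductive step, handling (i) and (ii) simultaneously.

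It remains to settle the base case $|{\bf k}|=0$, i.e.\ ${\bf k}={\bf 0}$, in which ${\bf x}_{\mathbf{I}}$ is empty and ${\bf x}=U$ is a joint reduction of $\mathbf{I}, J$ with respect to $M$ of type $({\bf 0},k_0+1)$; here one must establish the equality $e(J^{[k_0+1]},\mathbf{I}^{[{\bf 0}]};M)=e(U;\overline{M})$, so that nothing is lost for (i) at this stage either. This is the one-ideal instance of the correspondence between mixed multiplicities of maximal degrees and joint reductions recorded in \cite{htv} and \cite[Proposition~2.7, Remark~4.3]{TV4}: the existence of a joint reduction of type $({\bf 0},k_0+1)$ forces $e(J^{[k_0+1]},\mathbf{I}^{[{\bf 0}]};M)$ to be the mixed multiplicity of maximal degrees, whence $\dim\overline{M}=k_0+1$ and $U$ is a reduction of $J$ modulo $0_M:I^\infty$; since $J$ is $\mathfrak{m}$-primary it follows that $e(J^{[k_0+1]},\mathbf{I}^{[{\bf 0}]};M)=e(J;\overline{M})=e(U;\overline{M})$, the last equality being the invariance of the Hilbert-Samuel multiplicity under reductions. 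I expect the genuine obstacle to lie here rather than in the induction: one has to identify the ad hoc module $M/({\bf x}_{\mathbf{I}})M:I^\infty$ with the module $\overline{M/({\bf x}_{\mathbf{I}})M}$ that governs $e(J^{[k_0+1]},\mathbf{I}^{[{\bf 0}]};\,\cdot\,)$, confirm that its dimension is $k_0+1$ so that the mixed multiplicity does not collapse for a spurious reason, and verify that the $J$-part of a joint reduction of type $({\bf 0},k_0+1)$ reduces $J$ on $\overline{M}$ and not merely on $M$ — the one point where the ideal $I$ and the colon $0_M:I^\infty$ genuinely enter.
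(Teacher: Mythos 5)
Your proof is correct and follows essentially the same path as the paper's: the paper likewise passes to $M/({\bf x}_{\mathbf{I}})M$ via iterated applications of Corollary~\ref{vt5/3}~(i) and (iii), then invokes Remark~\ref{no4.3a}~(ii) to identify $e(J^{[k_0+1]},\mathbf{I}^{[\mathbf{0}]};M/({\bf x}_{\mathbf{I}})M)$ with $e(U;{M}/({\bf x}_{\mathbf{I}})M:I^\infty)$; your version merely spells out the induction that the paper leaves implicit. One small imprecision in your base case: you deduce $\dim\overline{M}=k_0+1$ from the existence of a joint reduction of type $({\bf 0},k_0+1)$, but this can fail; the correct statement, and the one Remark~\ref{no4.3a}~(ii) actually records, is that $e(J^{[k_0+1]},\mathbf{I}^{[\mathbf{0}]};M)=e(U;\overline{M})$ holds whether $\dim\overline{M}=k_0+1$ (the case you treat) or $\dim\overline{M}<k_0+1$ (in which case both sides vanish), so the equality needed for the base case survives in either event.
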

 Thus we obtained two formulas under essential assumptions.
   Moreover,
   Corollary \ref {co 1.vv} (ii), not
 only includes all cases
but also shows that $e(J^{[k_0 +1]}, \mathbf{I}^{[\mathbf{k}]}; M) > 0$ if and only if $\dim {M}/({\bf x}_{\mathbf{I}})M:I^\infty = k_0 +1.$
 Hence one could say that it seems to be a perfect transition of mixed multiplicities into the Hilbert-Samuel multiplicity.

Moreover, using Corollary \ref {co 1.4}, we prove the following result.

\begin{corollary}[{Corollary \ref{vt2019}}]\label{thm1.3}  Let ${\bf x}= x_1, \ldots, x_n$ be  a joint reduction  of
$\mathbf{I}, J$ with respect to $M$
 of the type $({\bf
k},k_0+1)$ with
${\bf x}_{\mathbf{I}} =x_1,\ldots,x_{|{\bf k }|} \subset {\bf I}.$ Assume that ${\bf x}$ is a system of parameters for ${M}.$
Set $I = I_1\cdots I_d.$ Then
\begin{itemize}
\item[$\mathrm{(i)}$] $e(J^{[k_0 +1]}, \mathbf{I}^{[\mathbf{k}]}; M)
\le e(\mathbf{x}; {M}).$
\item[$\mathrm{(ii)}$] $e(J^{[k_0 +1]}, \mathbf{I}^{[\mathbf{k}]}; M)
= e(\mathbf{x}; {M})$ if
 $\dim {M}/({\bf x}_{\mathbf{I}},I){M} < \dim
{M}/({\bf x}_{\mathbf{I}}){M}.$
\end{itemize}
\end{corollary}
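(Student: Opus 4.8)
The plan is to read the statement as a Rees-type theorem: expand both $e(\mathbf{x};M)$ and $e(J^{[k_0+1]},\mathbf{I}^{[\mathbf{k}]};M)$ by the associativity formula over the top-dimensional minimal primes of $M$, and reduce the whole assertion to a Rees-type identity on the integral domains that arise. Since ${\bf x}$ has $n$ members and is a system of parameters, $\dim M=n=|\mathbf{k}|+k_0+1$, so the mixed multiplicity is the coefficient of total degree $k_0+|\mathbf{k}|=n-1$ in $P(n_0,{\bf n},J,\mathbf{I},M)=P(n_0,{\bf n},J,\mathbf{I},\overline{M})$, a polynomial of total degree $\dim\overline{M}-1\le n-1$. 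If $\dim\overline{M}<n$ this coefficient is $0$ and (i) is trivial; otherwise $\dim\overline{M}=n$, the mixed multiplicity equals the leading mixed multiplicity $e(J^{[k_0+1]},\mathbf{I}^{[\mathbf{k}]};\overline{M})$ of $\overline{M}$, and the associativity formula gives
\[
e(J^{[k_0+1]},\mathbf{I}^{[\mathbf{k}]};M)=\sum_{\mathfrak{p}}\ell_{A_{\mathfrak{p}}}(M_{\mathfrak{p}})\,e(J^{[k_0+1]},\mathbf{I}^{[\mathbf{k}]};A/\mathfrak{p}),
\]
the sum over minimal primes $\mathfrak{p}$ of $M$ with $\dim A/\mathfrak{p}=n$ and $I\not\subseteq\mathfrak{p}$ (a top-dimensional minimal prime of $M$ containing $I$ is not in the support of $\overline{M}$ and drops out), while $e(\mathbf{x};M)=\sum_{\mathfrak{p}}\ell_{A_{\mathfrak{p}}}(M_{\mathfrak{p}})\,e(\mathbf{x};A/\mathfrak{p})$ over all minimal primes $\mathfrak{p}$ of $M$ with $\dim A/\mathfrak{p}=n$.

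Everything thus reduces to showing, for each minimal prime $\mathfrak{p}$ of $M$ with $\dim A/\mathfrak{p}=n$ and $I\not\subseteq\mathfrak{p}$, that $e(J^{[k_0+1]},\mathbf{I}^{[\mathbf{k}]};A/\mathfrak{p})=e(\mathbf{x};A/\mathfrak{p})$. Here $A/\mathfrak{p}$ is a domain of dimension $n$ with $\overline{A/\mathfrak{p}}=A/\mathfrak{p}$, on which ${\bf x}$ is a system of parameters. The plan is to peel $x_1,\dots,x_{|\mathbf{k}|}$ off one at a time (each $x_j$ in some $I_i$ with the corresponding entry of the type still positive), checking at each stage that the truncated sequence remains a joint reduction of the reduced type and a system of parameters for the current quotient, and invoking the equality cases Corollary \ref{co 1.4}(ii),(iii): the removed element is nonzero, hence a nonzerodivisor at the first step, and at the later steps it avoids the associated primes of the current quotient that do not contain $I$, so it is $I$-filter-regular there. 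This collapses the mixed multiplicity to the Hilbert--Samuel multiplicity of $J$ on $(A/\mathfrak{p})/(\mathbf{x}_{\mathbf{I}})(A/\mathfrak{p}):I^{\infty}$, where $U$ is a reduction of $J$; a parallel Hilbert--Samuel computation (peeling $\mathbf{x}_{\mathbf{I}}$ off $A/\mathfrak{p}$ in the same manner) identifies this with $e(\mathbf{x};A/\mathfrak{p})$. Substituting back gives $e(J^{[k_0+1]},\mathbf{I}^{[\mathbf{k}]};M)=\sum_{I\not\subseteq\mathfrak{p}}\ell_{A_{\mathfrak{p}}}(M_{\mathfrak{p}})\,e(\mathbf{x};A/\mathfrak{p})\le e(\mathbf{x};M)$, which is (i).

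For (ii), the defect $e(\mathbf{x};M)-e(J^{[k_0+1]},\mathbf{I}^{[\mathbf{k}]};M)$ equals the sum of the strictly positive terms $\ell_{A_{\mathfrak{p}}}(M_{\mathfrak{p}})\,e(\mathbf{x};A/\mathfrak{p})$ over the top-dimensional minimal primes $\mathfrak{p}$ of $M$ containing $I$, so it vanishes precisely when there is no such prime; and since $\mathbf{x}_{\mathbf{I}}$ is part of a system of parameters for each such $A/\mathfrak{p}$, the absence of such a prime is equivalent to $\dim M/(\mathbf{x}_{\mathbf{I}},I)M<\dim M/(\mathbf{x}_{\mathbf{I}})M$ — a minimal prime of $M$ of dimension $n$ containing $I$ produces a minimal prime of $M/(\mathbf{x}_{\mathbf{I}})M$ of dimension $k_0+1$ containing $I$, and conversely — which is the hypothesis of (ii); the same hypothesis forces $\dim\overline{M}=n$, so $e(J^{[k_0+1]},\mathbf{I}^{[\mathbf{k}]};M)=e(\mathbf{x};M)>0$. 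The main difficulty I anticipate is the domain identity of the second paragraph: both verifying that the given joint reduction of $M$ genuinely restricts to a joint reduction of each relevant $A/\mathfrak{p}$ (so that the machinery of Corollary \ref{co 1.4} applies there) and checking that the residual Hilbert--Samuel multiplicity against $U$ returns exactly $e(\mathbf{x};A/\mathfrak{p})$, together with the careful bookkeeping needed to line up the two associativity formulas.
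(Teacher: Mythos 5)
Your reduction to the domains $A/\mathfrak{p}$ via the associativity formula matches the paper's opening move, but the equality you then try to establish --- that $e(J^{[k_0+1]}, \mathbf{I}^{[\mathbf{k}]}; A/\mathfrak{p}) = e(\mathbf{x}; A/\mathfrak{p})$ for every top-dimensional minimal prime $\mathfrak{p}$ of $M$ with $I \not\subseteq \mathfrak{p}$ --- is false. The paper's own Example \ref{exam}, taken with $A = k[[x_1,x_2,x_3,x_4]]$ a regular domain, is a counterexample: $\mathfrak{p}=(0)$ is the only minimal prime and $I = I_1I_2 \neq 0$, so $I \not\subseteq \mathfrak{p}$, yet $e(J^{[3]}, \mathbf{I}^{[\mathbf{k}]}; A) = 0 < 1 = e(\mathbf{x}; A)$. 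The failure sits in the final step of your peeling argument. You correctly collapse the mixed multiplicity to $e\bigl(U;\ (A/\mathfrak{p})/[(\mathbf{x}_{\mathbf{I}})(A/\mathfrak{p}):I^\infty]\bigr)$, but then assert this equals $e(\mathbf{x};A/\mathfrak{p})$ by a ``parallel Hilbert--Samuel computation.'' The saturation $:I^\infty$ can drop dimension: in the example $(x_3):I^\infty = A$ because $I \subseteq (x_3)$, so the saturated quotient is the zero module while $e(\mathbf{x};A)=1$. The condition $I \not\subseteq \mathfrak{p}$ does not control this at all; what is actually needed is precisely the hypothesis of part (ii), $\dim M/(\mathbf{x}_{\mathbf{I}},I)M < \dim M/(\mathbf{x}_{\mathbf{I}})M$, which is what guarantees the saturation preserves the top-dimensional part. (Your intermediate assertion that each later $x_j$ ``avoids the associated primes of the current quotient that do not contain $I$'' is also unsupported --- joint reduction elements carry no built-in filter-regularity --- but the final step already fails in the example, where only one element is peeled and it is a genuine nonzerodivisor.)

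The paper circumvents exactly this obstacle by inducting on $|\mathbf{k}|$ and settling for an inequality on each piece rather than an equality. On each domain $A/\mathfrak{p}$ only the first element $x_1$ is peeled (it is a nonzerodivisor there, so Corollary \ref{vt5/3}(ii) gives equality at that one step), and the inductive hypothesis is then applied to $A/(x_1,\mathfrak{p})$ --- no longer a domain --- to get $e(J^{[k_0+1]}, \mathbf{I}^{[\mathbf{k}-\mathbf{e}_1]}; A/(x_1,\mathfrak{p})) \le e(x_2,\dots,x_n; A/(x_1,\mathfrak{p}))$. Combined with the Auslander--Buchsbaum identity $e(x_2,\dots,x_n; A/(x_1,\mathfrak{p}))=e(\mathbf{x};A/\mathfrak{p})$ and the additivity of $e(\mathbf{x};\cdot)$ over the top-dimensional minimal primes of $M$, this gives (i); for (ii) the dimension hypothesis is shown to pass to $A/(x_1,\mathfrak{p})$, upgrading each inequality to an equality.
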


Recall that Risler-Teissier \cite{Te} in 1973 proved that any mixed multiplicity is
the Hilbert-Samuel multiplicity of a superficial sequence, and  Rees in 1984 \cite{Re}
showed that each mixed multiplicity is
the multiplicity of a joint reduction. However, their
results only deal with the case where all ideals are $\mathfrak{m}$-primary.
Obviously, in general, to give results which are analogous to Risler-Teissier's and Rees's theorems, it is not always possible. So one gave sufficient conditions under which mixed
multiplicities are the Hilbert-Samuel multiplicity of joint
reductions (see \cite{VDT, TV3}).
In this context, Corollary \ref{thm1.3} (ii) is the strongest result, because on the one hand it covers the results  in
 \cite{Re, Te, TV3, VDT} (see Remark \ref{no4.3b}), on the other hand it also yields the following.

 \begin{corollary}[Corollary \ref{cr2019}]\label{cr3}  Let ${\bf x}= x_1, \ldots, x_n$ be  a joint reduction  of
$\mathbf{I}, J$ with respect to $M$
 of the type $({\bf
k},k_0+1)$ with ${\bf x}_{\mathbf{I}} =x_1,\ldots,x_{|{\bf k }|} \subset {\bf I}.$ Assume that $\mathrm{ht}\frac{\mathrm{Ann}[M/({\bf x}_{\mathbf{I}})M]+I}{\mathrm{Ann}[M/({\bf x}_{\mathbf{I}})M]} > 0$ and $k_0 + |{\bf k}| = \dim \overline{M}-1.$
Then
${\bf x}$ is a system of parameters for ${M}$ and $e(J^{[k_0 +1]}, \mathbf{I}^{[\mathbf{k}]}; M)
= e(\mathbf{x}; {M}).$
\end{corollary}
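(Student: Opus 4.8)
\textbf{Proof plan for Corollary \ref{cr3}.}

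The plan is to derive this from Corollary \ref{thm1.3}(ii), so the whole task reduces to verifying its two hypotheses: that $\mathbf{x}$ is a system of parameters for $M$, and that $\dim M/(\mathbf{x}_{\mathbf I}, I)M < \dim M/(\mathbf{x}_{\mathbf I})M$. First I would unwind the height hypothesis $\mathrm{ht}\,\frac{\mathrm{Ann}[M/(\mathbf{x}_{\mathbf I})M]+I}{\mathrm{Ann}[M/(\mathbf{x}_{\mathbf I})M]} > 0$: this says precisely that $I$ is not contained in any minimal prime of $\mathrm{Ann}[M/(\mathbf{x}_{\mathbf I})M]$, equivalently $I$ is not contained in any associated prime of maximal dimension, equivalently passing to the quotient by $I^\infty$ strictly drops the dimension. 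Hence $\dim M/(\mathbf{x}_{\mathbf I})M:I^\infty < \dim M/(\mathbf{x}_{\mathbf I})M$, which is essentially the strict-inequality hypothesis needed for Corollary \ref{thm1.3}(ii) once one checks $\dim M/(\mathbf{x}_{\mathbf I}, I)M = \dim M/(\mathbf{x}_{\mathbf I})M:I^\infty$ (these agree because $V(\mathbf{x}_{\mathbf I}, I)$ and $\mathrm{Supp}\,M/(\mathbf{x}_{\mathbf I})M:I^\infty$ cut out the same closed set in $\mathrm{Supp}\,M/(\mathbf{x}_{\mathbf I})M$).

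Next I would establish that $\mathbf{x}$ is a system of parameters for $M$. Since $\mathbf{x}$ is a joint reduction of $\mathbf I, J$ with respect to $M$ of type $(\mathbf{k}, k_0+1)$, it has $|\mathbf{k}| + (k_0+1) = \dim\overline M$ elements by the assumption $k_0 + |\mathbf{k}| = \dim\overline M - 1$. So the length $n$ of $\mathbf{x}$ equals $\dim\overline M$. To see it is a genuine system of parameters for $M$ I would argue that $\mathbf{x}$ generates an ideal primary to a parameter ideal on $M$: the $\mathbf{x}_{\mathbf I}$ part forces the dimension of $M/(\mathbf{x}_{\mathbf I})M:I^\infty$ down to $k_0+1$ (this is the "$\dim = k_0+1$" content already recorded after Corollary \ref{co 1.vv}, available because $e(J^{[k_0+1]},\mathbf I^{[\mathbf k]};M)$ is defined hence positive once the dimension count is sharp), and then the $U \subset J$ part, being $|U| = k_0+1$ elements of the $\mathfrak m$-primary ideal $J$, kills the remaining dimension. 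Combined with the fact that $\dim M \le \dim\overline M$ is impossible to exceed here and $\dim\overline M \neq 0$, one gets that $(\mathbf{x})M$ is $\mathfrak m$-primary on a module of the right dimension, i.e. a system of parameters. I would need to be a little careful that $\dim M = \dim\overline M$ in the relevant sense, or alternatively phrase everything modulo $0_M:I^\infty$ from the start.

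With both hypotheses of Corollary \ref{thm1.3} verified, part (ii) of that corollary gives $e(J^{[k_0+1]}, \mathbf I^{[\mathbf k]}; M) = e(\mathbf{x}; M)$ immediately, completing the proof. The main obstacle I anticipate is the bookkeeping in the second paragraph: translating the height condition on $\mathrm{Ann}[M/(\mathbf{x}_{\mathbf I})M]$ into the precise dimension drop $\dim M/(\mathbf{x}_{\mathbf I}, I)M < \dim M/(\mathbf{x}_{\mathbf I})M$ and simultaneously pinning down that the joint reduction $\mathbf{x}$, which a priori only satisfies the combinatorial identity $\mathbb I^{\mathbf n}M = \sum (\frak I_i)\mathbb I^{\mathbf n - \mathbf e_i}M$, actually forms a full system of parameters rather than something shorter after collapsing. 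The positivity statement "$e > 0$ iff $\dim M/(\mathbf{x}_{\mathbf I})M:I^\infty = k_0+1$" from the discussion following Corollary \ref{co 1.vv} is the key leverage point, so I would want to invoke it cleanly rather than re-prove it.
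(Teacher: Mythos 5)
Your overall strategy matches the paper's: reduce to Corollary \ref{vt2019}(ii) by verifying that $\mathbf{x}$ is a system of parameters and that $\dim M/(\mathbf{x}_{\mathbf{I}},I)M < \dim M/(\mathbf{x}_{\mathbf{I}})M$. However, the first paragraph contains two genuine errors, and the second paragraph has a circularity.

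On the dimension step: you claim that the height hypothesis is ``equivalently passing to the quotient by $I^\infty$ strictly drops the dimension,'' i.e.\ $\dim M/(\mathbf{x}_{\mathbf{I}})M\colon I^\infty < \dim M/(\mathbf{x}_{\mathbf{I}})M$. This is false. If $I$ avoids every minimal prime of $\mathrm{Supp}\,M/(\mathbf{x}_{\mathbf{I}})M$ (which is what the height condition says), then the $I$-torsion submodule is supported away from all minimal primes, and saturation by $I$ removes no top-dimensional component; the paper's own proof records exactly $\mathrm{Min}[M/(\mathbf{x}_{\mathbf{I}})M] = \mathrm{Min}[M/(\mathbf{x}_{\mathbf{I}})M\colon I^\infty]$, so these dimensions are \emph{equal}, not strictly ordered. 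Your follow-up claim $\dim M/(\mathbf{x}_{\mathbf{I}},I)M = \dim M/(\mathbf{x}_{\mathbf{I}})M\colon I^\infty$ is likewise false: the support of $M/(\mathbf{x}_{\mathbf{I}})M\colon I^\infty$ is roughly the union of components on which $I$ acts non-nilpotently, whereas $V(\mathbf{x}_{\mathbf{I}},I)\cap\mathrm{Supp}\,M$ is where $I$ vanishes — these are complementary among the minimal primes, not coincident. You have confused saturating by $I$-torsion with modding out by $I$. The remedy is simple: the desired inequality $\dim M/(\mathbf{x}_{\mathbf{I}},I)M < \dim M/(\mathbf{x}_{\mathbf{I}})M$ follows \emph{directly} from the height hypothesis, since $V(I)$ misses every minimal prime of $\mathrm{Supp}\,M/(\mathbf{x}_{\mathbf{I}})M$; no $I^\infty$ detour is needed.

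On the system-of-parameters step: you invoke the positivity equivalence ``$e>0$ iff $\dim M/(\mathbf{x}_{\mathbf{I}})M\colon I^\infty = k_0+1$'' to pin down the dimension, but you only know $e$ is \emph{defined}, not that it is positive, so this is circular. The paper instead uses Remark \ref{no4.3a}(i): since $U=x_{|\mathbf{k}|+1},\ldots,x_n$ is a joint reduction of type $(\mathbf{0},k_0+1)$ for $M/(\mathbf{x}_{\mathbf{I}})M$, the ideal $(U)$ is an ideal of definition of $M/(\mathbf{x}_{\mathbf{I}})M\colon I^\infty$, forcing $\dim M/(\mathbf{x}_{\mathbf{I}})M\colon I^\infty \le |U| = k_0+1 = \dim\overline{M} - |\mathbf{k}|$. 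Combining this with the equality $\dim M/(\mathbf{x}_{\mathbf{I}})M = \dim M/(\mathbf{x}_{\mathbf{I}})M\colon I^\infty$ from the height condition and the trivial chain $\dim M - |\mathbf{k}| \le \dim M/(\mathbf{x}_{\mathbf{I}})M$ and $\dim\overline{M}\le\dim M$ squeezes $\dim M/(\mathbf{x}_{\mathbf{I}})M = \dim M - |\mathbf{k}|$, so $\mathbf{x}_{\mathbf{I}}$ is part of a system of parameters; then the ideal-of-definition fact plus the equality of $\mathrm{Min}$-sets shows $\mathbf{x}_{\mathbf{I}},U$ is a full system of parameters. Your proposal lacks this chain of inequalities and the ideal-of-definition input, which is the actual engine of the argument.
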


    Corollary \ref{thm1.3} and Corollary \ref{cr3} seem to make the
problem of expressing mixed multiplicities into the Hilbert-Samuel
multiplicity of joint reductions become clear.

Returning now to Theorem \ref{thm1.1}, applying this theorem we easily recover early results on mixed multiplicities of ideals (see Remark \ref{no4.2a}). Moreover, the assumption of the theorem seems to be minimal.
 So we hope that
it would become an effective tool.
In fact, Theorem \ref{thm1.1} yields Corollary \ref {co 1.4}; Corollary \ref{co 1.vv} and Corollary \ref{thm1.3} under most essential assumptions.
 Different from the previous approaches, our approach is based on the Euler-Poincar\'{e} characteristic of joint
reductions. We give the recursion formula for this characteristic in Proposition \ref{pru2020} that is a key tool in
this paper.

The paper is divided into four sections. Section 2 deals with polynomials
 called temporarily the Euler-Poincar\'{e} polynomial of graded modules, and gives the recursion formula for these polynomials (Proposition \ref{lem 3.4}).
Section 3 is devoted to the discussion of the
Euler-Poincar\'{e} characteristic of joint
reductions of ideals, and proves the recursion formula for this characteristic with respect to joint reductions (Proposition \ref{pru2020}).
In Section 4, we prove Theorem \ref{thm1.1} and corollaries for mixed multiplicities of ideals. The paper is ended by Example \ref{exam} and Remark \ref{no4.2a}.

\section{ The Euler-Poincar\'{e} polynomial of joint
reductions}

This section first introduces  polynomials
 called temporarily
the Euler-Poincar\'{e} polynomial of joint
reductions, and gives the recursion formula for these polynomials.

Let $(A, \frak{m})$ be a Noetherian local ring  with maximal ideal
$\mathfrak{m}$ and infinite residue field $k = A/\mathfrak{m}.$  Let $J$ be an $\frak
m$-primary ideal. Set
\begin{align*}
&{\bf
0}=(0,\ldots,0); \mathbf{e}_i = (0, \ldots,  \stackrel{(i)}{1},  \ldots, 0)\in  \mathbb{N}^{d}; \\
&{\bf n} =(n_1,\ldots,n_d); {\bf k} =(k_1,\ldots,k_d) \in  \mathbb{N}^{d};\\
&{\bf 1}=(1,\ldots,1) \in  \mathbb{N}^{d}; |{\bf k}| = k_1 + \cdots + k_d.
\end{align*}
  Let
$S= \bigoplus_{\mathbf{n} \geq \mathbf{0}}S_{\mathbf{n}}$
 be a finitely generated standard $\mathbb{N}^d$-graded algebra over $A$ (i.e., $S$ is generated over $A$
  by elements of total degree 1); $V=\bigoplus_{\bf n\ge \bf 0}V_{\bf n}$
  be  a finitely generated $\mathbb{N}^d$-graded $S$-module.
  Set  $S_i= S_{{\bf e}_i}$
  for all $ 1 \le i \le d,$ $\mathbf{S} = S_1, \ldots, S_d.$

  Now, we would like to recall the following
concept of joint reductions (see \cite{KR1, TV4}).

\begin{definition}\label{de 6/7} Set $J=S_0.$ Let $\frak R_i$ be a sequence  consisting $k_i$
elements of $S_i$ for all $0 \le i \le d$ and $k_0,\ldots,k_d \ge
0.$
 Put  ${\frak R} = \frak R_1, \ldots, \frak
 R_d, \frak R_0$  and $(\emptyset) = 0_S$.
 Then $\frak R$ is called a {\it joint
reduction} of $\mathbf{S}, J$ with respect to $V$ of the type
$({\bf k}, k_0)$ if
$J^{n_0}V_\mathbf{n} = (\frak
R_0)J^{n_0-1}V_{\mathbf{n}} +\sum_{i=1}^d(\frak R_i) J^{n_0
}V_{\mathbf{n} - \mathbf{e}_i} \mathrm{\; for \; all \; large}\; n_0, \bf n.$
\end{definition}

\begin{note}\label{n2} Let ${\frak R} = \frak R_1, \ldots, \frak
 R_d, \frak R_0$  be a joint reduction of $\mathbf{S},
J$ with respect to $V$ with $x_1 \in \frak R_1.$ Set $\frak R'_1 = \frak R_1 \setminus \{x_1\}$ and ${\frak R'} = \frak R'_1,\frak R_2, \ldots, \frak
 R_d, \frak R_0$. Set $Q =JS_{\bf 1}$ and $H =(\frak R_0)S_{\bf 1} + \sum_{i=1}^dJ(\frak R_i)S_{{\bf 1-e}_i};$ $V({\bf r})=\bigoplus_{\bf n\ge \bf r}V_{\bf n}.$
 Then we obtain the following.
\begin{enumerate}[(i)]
\item      Note that $S_{\bf 1}= S_1\cdots S_d$ and $S = A[S_1,\ldots,S_d].$
Since $S$ is a finitely generated standard graded algebra over $A$ and
 $V$ is a finitely generated graded $S$-module,
   by \cite[Lemma 2.3]{Vi6}, there exists $\bf r \ge \bf 0$  such that
$V_{\bf n + \bf r}= S_{\bf n}V_{\bf r}$ for all $\bf n \ge 0.$ In this case,
$V({\bf r})= SV_{\bf r}.$
  This implies that
${\frak R}$ is a joint reduction of $\mathbf{S},
J$ with respect to $V$  if and only if $(H)$ is a reduction of $(Q)$ with respect to $V({\bf r})$ for some $\bf r,$ i.e., $(Q)^{m+1}V({\bf r}) =  (H)(Q)^{m}V({\bf r})$ for all large $m$ \cite{NR}.

\item
 Let $x_1 \in \sqrt{\mathrm{Ann}_SV}.$
  By (i), $(H)$ is a reduction of $(Q)$ with respect to $V({\bf r})$ for some $\bf r.$ Hence, by \cite[Lemma 8.1.4]{SH},
$(H)$ is a reduction of $(Q)$ with respect to $S/P$ for all $P \in \mathrm{Min}_S{V({\bf r})}.$
Now since $x_1 \in \sqrt{\mathrm{Ann}_SV}$ and $\mathrm{Ann}_SV \subset \mathrm{Ann}_SV({\bf r}),$ it follows that $x_1 \in P$ for all $P \in \mathrm{Min}_S{V({\bf r})}.$ Hence
$$((\frak R_0)S_{\bf 1} + J(\frak R'_1)S_{{\bf 1-e}_1}+ \sum_{i=2}^dJ(\frak R_i)S_{{\bf 1-e}_i})$$ is a reduction of  $(Q)$
with respect to $S/P$ for all $P \in \mathrm{Min}_S{V({\bf r})}.$ Therefore, $((\frak R_0)S_{\bf 1} + J(\frak R'_1)S_{{\bf 1-e}_1}+ \sum_{i=2}^dJ(\frak R_i)S_{{\bf 1-e}_i})$ is a reduction of  $(Q)$ with respect to $V({\bf r})$ by \cite[Lemma 8.1.4]{SH}.
So by (i), ${\frak R'}$ is also a joint reduction of $\mathbf{S},
J$ with respect to $V.$

\item
Let $V'=\bigoplus_{\bf n\ge \bf 0}V'_{\bf n}$
  be  a graded $S$-submodule of $V$.
  Note that  by (i), $(H)$ is  a reduction of  $(Q)$
with respect to
$V({\bf r})$ for some $\bf r.$ Set $V'({\bf r})= \bigoplus_{\bf n\ge \bf r}V'_{\bf n}.$
Since $V'({\bf r})\subseteq V({\bf r}),$
    it follows that
$(H)$ is  a reduction of  $(Q)$
with respect to $V'({\bf r})$ because
$\mathrm{Ann}_SV'({\bf r}) \supset \mathrm{Ann}_SV({\bf r}))$ (see e.g \cite[Lemma 17.1.4]{SH}).
From this it follows that  ${\frak R}$ is also a joint reduction of $\mathbf{S},
J$ with respect to $V'$ by (i).

\item Note that $(0_V : x_1)$ is a graded $S$-submodule of $V.$ Hence by (iii), ${\frak R}$ is a joint reduction of $\mathbf{S},
J$ with respect to $(0_V : x_1).$ Therefore since $x_1(0_V : x_1) = 0_V,$ it follows that ${\frak R'}$ is a joint reduction of $\mathbf{S},
J$ with respect to $(0_V : x_1).$
\end{enumerate}
\end{note}

Let $t$ be a variable over $A.$ Then with the notations $S$ and $V$ as the above, we have
the $\mathbb{N}^{d+1}$-graded algebra $\mathcal{S} = S[t] =
\bigoplus_{n_0\geqslant 0,\bf n\geqslant 0}S_{\mathrm{\bf
n}}t^{n_0}$ and the $\mathbb{N}^{d+1}$-graded
$\mathcal{S}$-modules  $\mathcal{V} = \bigoplus_{n_0\geqslant
0,\bf n \geqslant 0}V_{\mathrm{\bf n}}t^{n_0}\;
\text{ and }\;\mathcal{V}' = \bigoplus_{n_0\geqslant 0,\bf n
\geqslant 0}J^{n_0}V_{\mathrm{\bf n}}t^{n_0}.$

Let ${\frak R} = \frak R_1, \ldots, \frak
 R_d, \frak R_0$  be a joint reduction of $\mathbf{S},
J$ with respect to $V$ of the type $({\bf k}, k_0)$, where $
\mathfrak{R}_0 \subset J, \mathfrak{R}_i \subset S_i$ for all $1 \le i \le d.$ Put $\frak R_0t = \{at \mid a \in \frak R_0 \};$
$X= \frak R_1, \ldots, \frak
 R_d, \frak R_0t;$  $n  = k_0 + |{\bf k}|.$

 In \cite{TV4}, one considered the Koszul complex of
$\mathcal{V}$ with respect to $X:$
$$ 0\longrightarrow K_n(X,\mathcal{V})\longrightarrow K_{n-1}(X,\mathcal{V})\longrightarrow
\cdots\longrightarrow K_1(X,\mathcal{V})\longrightarrow
K_0(X,\mathcal{V}) \longrightarrow 0$$ and obtained the homology modules
$H_0({X},\mathcal{V}),
 H_1({X},\mathcal{V}),\ldots,H_n({X},\mathcal{V}).$
  By applying \cite[Theorem 4.2]{KR1}, \cite{TV4} showed that $\ell_A\big(H_i({
X},\mathcal{V})_{{(n_0,{\bf n})}}\big) < \infty$ for all large enough
$n_0, \bf n$ and for  all $0\le i \le n,$ and then
  $\sum_{i=0}^n (-1)^i \ell_A\big(H_i({ X},\mathcal{V})_{{(n_0,{\bf
n})}}\big)$ is a polynomial in $n_0, \bf n$ for all large  enough $n_0,
\bf n.$ This polynomial is called temporarily the {\it Euler-Poincar\'{e} polynomial of $\frak R,$} and
denoted by $
\chi(n_0,\mathbf{n},\frak R, J, V).$

Then by an argument analogous to what used for the proof of \cite[Lemma 3.3] {VT3} (see the proof of \cite[Theorem 4.7.4] {BH1}), in this context,
 we get the following proposition.
\vskip 0.2cm
\begin{proposition}\label{lem 3.4} Let $\frak R = x_1,\ldots,x_n$ be a joint
reduction of $\mathbf{S}, J$ with respect to $V.$ Set $\frak R' = x_2,\ldots,x_n.$
Then the following statements hold.
\begin{itemize}
\item[$\mathrm{(i)}$] If $x_1 \in
\sqrt{\mathrm{Ann}_SV},$ then $
\chi(n_0,\mathbf{n},\frak R, J, V)=0.$
\item[$\mathrm{(ii)}$] If $x_1$ is a $V$-regular element, then $
\chi(n_0,\mathbf{n},\frak R, J, V)=
\chi(n_0,\mathbf{n},\frak R', J, V/x_1V).$
\item[$\mathrm{(iii)}$] $\chi(n_0,\mathbf{n},\frak R, J, V) = \chi(n_0,\mathbf{n},\frak R', J, V/x_1V)-\chi(n_0,\mathbf{n},\frak R', J, 0_V:x_1).$
\end{itemize}
\end{proposition}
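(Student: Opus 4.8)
The plan is to transcribe the classical Koszul Euler-characteristic argument from the proof of \cite[Theorem 4.7.4]{BH1} --- in the form used for \cite[Lemma 3.3]{VT3} --- into the present multigraded joint-reduction setting, by isolating two standard properties of $\chi$. First, \emph{additivity}: if $0\to V'\to V\to V''\to 0$ is an exact sequence of finitely generated $\mathbb{N}^{d}$-graded $S$-modules for each of which $\frak R$ is a joint reduction of $\mathbf S,J$, then for every multidegree $(n_0,\mathbf n)\gg 0$ the whole long exact sequence of Koszul homology attached to $X$ and to the induced short exact sequence of $\mathcal S$-modules consists of $A$-modules of finite length (by \cite[Theorem 4.2]{KR1} applied to each of $V',V,V''$ --- this is exactly where the joint-reduction hypothesis enters), so the alternating sum of lengths along it vanishes and rearranges to $\chi(n_0,\mathbf n,\frak R,J,V)=\chi(n_0,\mathbf n,\frak R,J,V')+\chi(n_0,\mathbf n,\frak R,J,V'')$ as polynomials. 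I shall invoke this only for subquotients of $V$: by Note \ref{n2}(iii) the sequence $\frak R$ is a joint reduction with respect to every graded submodule of $V$, hence (quotients being immediate from Definition \ref{de 6/7}) with respect to every graded subquotient, so additivity always applies below.

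Second, a \emph{deletion recursion}: write $X=(Y,X')$, where $Y$ is the element of $\mathcal S$ carrying $x_1$ --- so $Y=x_1$, of degree $(0,\mathbf e_j)$, when $x_1\in\frak R_j$ with $1\le j\le d$, and $Y=x_1t$, of degree $(1,\mathbf 0)$, when $x_1\in\frak R_0$ --- and $X'$ carries $\frak R'$. Then the Koszul complex built from $X$ is the mapping cone of multiplication by $\pm Y$ on the one built from $X'$, and running this through the homology long exact sequence expresses $\chi(n_0,\mathbf n,\frak R,J,W)$ in terms of $\chi(n_0,\mathbf n,\frak R',J,W)$ and the internal shift $\deg Y$ whenever $\frak R'$ is itself a joint reduction with respect to $W$; if moreover $x_1W=0$ then $\pm Y$ is zero, the cone splits, and $\frak R'$ is automatically a joint reduction with respect to $W$ by Note \ref{n2}(ii).

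With these in hand: for the first statement, since $x_1^{N}V=0$ for some $N$, the filtration $V\supseteq x_1V\supseteq\cdots\supseteq x_1^{N}V=0$ has all quotients killed by $x_1$, so by additivity we may assume $x_1V=0$; then the $x_1$-term drops out of the joint-reduction identity of Definition \ref{de 6/7} and, iterating what remains against $x_1^{N}V=0$ (equivalently, by Note \ref{n2}(i) the nilpotent $Y$ lies in the reduction $H$ of $Q$ and contributes nothing, so $Q^{m+1}V(\mathbf r)$ is absorbed into the other generators), the relevant graded components of the Koszul module vanish for all large $(n_0,\mathbf n)$; hence every $H_i(X,\mathcal V)_{(n_0,\mathbf n)}=0$ there and $\chi(n_0,\mathbf n,\frak R,J,V)\equiv 0$. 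For the third statement, apply additivity to $0\to 0_V:x_1\to V\to x_1V\to 0$ and to $0\to x_1V\to V\to V/x_1V\to 0$; since $x_1$ kills $0_V:x_1$, Note \ref{n2}(ii) and the first statement give $\chi(n_0,\mathbf n,\frak R,J,0_V:x_1)=0$, so $\chi(\frak R,J,V)=\chi(\frak R,J,x_1V)$, and then, writing $x_1V$ as a twist of $V/(0_V:x_1)$ and applying the deletion step to the modules $V/(0_V:x_1)$, $V/x_1V$ and $0_V:x_1$ --- with $x_1$ acting as zero on the last two --- the internal twists produced by deletion cancel those coming from the short exact sequences and the terms reorganize, formally as in \cite[Theorem 4.7.4]{BH1}, to $\chi(n_0,\mathbf n,\frak R,J,V)=\chi(n_0,\mathbf n,\frak R',J,V/x_1V)-\chi(n_0,\mathbf n,\frak R',J,0_V:x_1)$. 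The second statement is the case $0_V:x_1=0$.

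The main obstacle I expect is the bookkeeping in the deletion step and its use in the third statement. Since $Y$ has nonzero internal degree, the long exact sequence relating $H_i(X,-)$ and $H_i(X',-)$ involves degree-shifted copies of the latter; one must check, via Note \ref{n2}, that $\frak R'$ is a joint reduction with respect to \emph{every} module it is applied to (so that the relevant Euler-Poincar\'{e} polynomials are defined) and verify, via \cite[Theorem 4.2]{KR1}, that all the homology occurring has finite length in every large multidegree, before the alternating-sum manipulations are legitimate. The cancellation of the internal twists --- which is what leaves the clean shift-free recursion of the third statement --- is the delicate point, but it is formally the same reorganization as in the proof of \cite[Theorem 4.7.4]{BH1}.
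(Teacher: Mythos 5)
Your overall plan --- additivity on subquotients plus a Koszul deletion recursion, in imitation of \cite[Theorem 4.7.4]{BH1} --- is the right framework, and the filtration reduction in part (i) and the use of \cite[Theorem 4.2]{KR1} to guarantee finite lengths are exactly what is needed. But two of the three parts contain concrete errors.

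In part (i), after reducing to $x_1V=0$ you claim that ``every $H_i(X,\mathcal V)_{(n_0,\mathbf n)}=0$'' for large multidegree. This is false. When $Y$ acts as zero on $\mathcal V$, the Koszul complex on $X=(Y,X')$ is the \emph{split} mapping cone of the zero map, so $H_i(X,\mathcal V)\cong H_i(X',\mathcal V)\oplus H_{i-1}(X',\mathcal V)(-\deg Y)$, which is generally nonzero. What does vanish is the alternating sum of lengths: the long exact sequence of \cite[Corollary 1.6.13(a)]{BH1} breaks into short exact sequences (since $x_1$ kills all $H_i(X',\mathcal V)$), and the sum telescopes to zero; one then bootstraps from $x_1\mathcal V=0$ to $x_1\in\sqrt{\mathrm{Ann}_S V}$ exactly as you propose. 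So your conclusion is right but your intermediate claim is wrong; you need the split-cone/telescoping argument, not a vanishing statement.

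In part (iii), the step ``writing $x_1V$ as a twist of $V/(0_V:x_1)$ and applying the deletion step'' does not go through, because $x_1$ need not be regular on $V/(0_V:x_1)$: if $v\in V$ satisfies $x_1v\ne 0$ and $x_1^2v=0$, then the image of $v$ in $V/(0_V:x_1)$ is a nonzero element killed by $x_1$. So the deletion step (which requires regularity, i.e.\ part (ii)) cannot be applied to $V/(0_V:x_1)$. The correct object is $E=0_V:x_1^\infty$, on whose quotient $V/E$ the element $x_1$ \emph{is} regular; one then exploits $x_1V\cap E=x_1E$ to produce the exact sequence $0\to E/x_1E\to V/x_1V\to V/(x_1V+E)\to 0$, applies (i) to $E$ and to $E/x_1E$ (both killed by a power of $x_1$), compares $E/x_1E$ with $0_V:x_1$ via the four-term exact sequence, and applies (ii) to $V/E$ to obtain $\chi(\mathfrak R,V)=\chi(\mathfrak R',V/(x_1V+E))$. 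Your single iteration $V\rightsquigarrow V/(0_V:x_1)$ stops short of the stable torsion module $E$.

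Finally, deducing (ii) as ``the case $0_V:x_1=0$'' of (iii) is circular once (iii) is repaired along the lines just indicated, because the repaired (iii) invokes (ii) in the regular case. Part (ii) should be proved directly from \cite[Corollary 1.6.13(b)]{BH1}, which gives $H_i(X,\mathcal V)\cong H_i(X',\mathcal V/x_1\mathcal V)$ when $x_1$ is $V$-regular, and then (iii) can safely use it.
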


\begin{proof}
Assume that ${\frak R} = \frak R_1, \ldots, \frak
 R_d, \frak R_0,$ where $ \mathfrak{R}_0 \subset J, \mathfrak{R}_i \subset S_i$ for all $1 \le i \le d.$ Put $\frak R_0t = \{at \mid a \in \frak R_0 \};$
$X= \frak R_1, \ldots, \frak
 R_d, \frak R_0t.$ And without loss of generality, we may assume that $x_1 \in X.$ Set $X' = X \backslash \{x_1\}.$ Since $\frak R$ is a joint
reduction of $\mathbf{S}, J$ with respect to $V,$ it follows that $\frak R'$ is a joint
reduction of $\mathbf{S}, J$ with respect to $V/x_1V.$

The proof of (i): First we consider the case that $x_1 \in \mathrm{Ann}_SV.$  By \cite[Corollary 1.6.13 (a)]{BH1}, we have the following exact sequence
\begin{equation}\label{v02020}\cdots\xrightarrow{\pm x_1} H_i(X',\mathcal{V}) \longrightarrow H_{i}(X,\mathcal{V})\longrightarrow H_{i-1}(X',\mathcal{V})\xrightarrow{\pm x_1} H_{i-1}(X',\mathcal{V})\longrightarrow \cdots.
\end{equation}
Observe that since $x_1\mathcal{V} = 0,$ it follows that
 $x_1$ annihilates $H_{i}(X',\mathcal{V})$ for all $i.$  Consequently, the exact sequence (\ref{v02020}) breaks up into the following exact sequences
$$0\longrightarrow H_i(X',\mathcal{V}) \longrightarrow H_{i}(X,\mathcal{V})\longrightarrow H_{i-1}(X',\mathcal{V})\longrightarrow 0$$
for all $i.$ Moreover, since $x_1\mathcal{V} = 0,$ it follows that
$X'$ is also a joint
reduction of  $\mathcal{V}.$  So
  $$\ell_A[H_i({X},\mathcal{V})_{(n_0,\mathbf{n})}] = \ell_A[H_i(X',\mathcal{V})_{(n_0,\mathbf{n})}] + \ell_A[H_{i-1}(X',\mathcal{V})_{(n_0,\mathbf{n})}]$$
for all $i$ and for all large  $n_0,\mathbf{n}$. Note  that
$H_{n + 1}(X, \mathcal{V}) = H_{-1}(X,\mathcal{V}) = 0.$ Hence
$$\begin{aligned}&\sum_{i=0}^{n+1} (-1)^i \ell_A(H_i({X},\mathcal{V})_{(n_0,\mathbf{n})})\\
&= \sum_{i=0}^{n+1} (-1)^i [\ell_A(H_i(X',\mathcal{V})_{(n_0,\mathbf{n})}) + \ell_A(H_{i-1}(X',\mathcal{V})_{(n_0,\mathbf{n})})] = 0 \end{aligned}$$
for all large $n_0,\mathbf{n}.$ So $\chi(n_0,\mathbf{n},\frak R, J, V)= 0.$ This implies also that for any $x \in \frak R,$
 $\chi(n_0,\mathbf{n},\frak R, J, V/xV)= 0.$
We turn now to the case that $x_1 \in \sqrt{\mathrm{Ann}_SV}.$ By Note \ref{n2} (ii), $\frak R'$ is also a joint
reduction of $\mathbf{S}, J$ with respect to $V.$
And there exists $u > 0$ such that
$x_1^u V = 0_V.$
Recall that by \cite[Remark 3.4 (i)]{TV4},
 \begin{equation}\label{v021}  \chi(n_0,\mathbf{n},\frak R, J, \underline{\;\;}\,) \text{ is additive on  exact sequences of S-modules.}
\end{equation}
Since $\chi(n_0,\mathbf{n},\frak R, J, V/x_1V)= 0$ and the exact sequence
$$0\longrightarrow x_1{V} \longrightarrow {V} \longrightarrow {V}/x_1{V}\longrightarrow 0,$$ it follows by (\ref{v021}), that
$\chi(n_0,\mathbf{n},\frak R, J, V)= \chi(n_0,\mathbf{n},\frak R, J, x_1V).$
Therefore, $$\chi(n_0,\mathbf{n},\frak R, J, V)= \chi(n_0,\mathbf{n},\frak R, J, x_1^jV)$$ for all $j \ge 1.$ Consequently, $\chi(n_0,\mathbf{n},\frak R, J, V)=\chi(n_0,\mathbf{n},\frak R, J, x_1^uV) = 0.$
 We get (i).

The proof of (ii): Since  $x_1$ is $V$-regular, it follows that $x_1$ is $\mathcal{V}$-regular. Hence
 $$H_i(X, \mathcal{V}) \cong H_i(X', \mathcal{V}/x_1\mathcal{V})$$ by \cite[Corollary 1.6.13 (b)]{BH1}. Thus
$\ell_A(H_i({X},\mathcal{V})_{(n_0,\mathbf{n})}) = \ell_A(H_i({X}',\mathcal{V}/x_1\mathcal{V})_{(n_0,\mathbf{n})})$
for all $n_0,\mathbf{n}$ and for all $i$. Therefore we have (ii).

  The proof of (iii): Consider the following cases.

If  $x_1 \in \sqrt{\mathrm{Ann}_SV},$ then $\frak R'$ is a joint
reduction of $\mathbf{S}, J$ with respect to $V$ by Note \ref{n2} (ii).
 In this case, $\chi(n_0,\mathbf{n},\frak R, J, V)=0$ by (i).
On the other hand, ${\frak R'}$ is also a joint reduction of $\mathbf{S},
J$ with respect to $0_V : x_1$ by Note \ref{n2} (iv). Hence
since the exact sequence
$$0\longrightarrow (0_{{V}}:x_1) \longrightarrow {V} \xrightarrow{\;\;x_1\;\;} {V}\longrightarrow {V}/x_1{V}\longrightarrow 0,$$
we have $\chi(n_0,\mathbf{n},\frak R', J, V/x_1V) - \chi(n_0,\mathbf{n},\frak R', J, 0_V: x_1) = 0$ by (\ref{v021}).
So $$\chi(n_0,\mathbf{n},\frak R, J, V) =\chi(n_0,\mathbf{n},\frak R', J, V/x_1V) - \chi(n_0,\mathbf{n},\frak R', J, 0_V: x_1).$$

Set set $ E = 0_{V}: x_1^{\infty}.$ By Note \ref{n2} (iii), ${\frak R}$ is also a joint reduction of $E.$
Let $x_1 \notin \sqrt{\mathrm{Ann}_SV}.$ Then $x_1$ is $(V/E)$-regular. So $ x_1V \cap E = x_1E.$
Therefore we get
 the exact sequence
 $$0\longrightarrow {E}/{x_1E}\longrightarrow {V}/{x_1 V} \longrightarrow {V}/({x_1 V +E}) \longrightarrow 0.$$
  Hence by (\ref{v021}),
 \begin{equation}\label{v00} \chi(n_0,\mathbf{n}, {\frak R}', {V}/({x_1V+E})) = \chi(n_0,\mathbf{n}, {\frak R}',
{V}/{x_1V})- \chi(n_0,\mathbf{n}, {\frak R}', {E}/{x_1E}).
\end{equation}
Since $V$ is Noetherian, it follows that $E = 0_{ V
}: x_1^v$ for
some $v > 0.$ Consequently $x_1^vE = 0_V.$ Hence $x_1  \in \sqrt{\mathrm{Ann}_{S}E}.$ So by Note \ref{n2} (ii), ${\frak R}'$ is a joint reduction of $E.$ Recall that  ${\frak R}'$ is also a joint reduction of $0_{ V
}:x_1$ by Note \ref{n2} (iv).
Note that $(0_{ V
}:x_1) \subset E.$ Then we have
 the exact sequence
$$0\longrightarrow (0_{ V
}:x_1) \longrightarrow E \xrightarrow{\;\;x_1\;\;} E\longrightarrow {E}/{x_1E}\longrightarrow 0.$$
Therefore by (\ref{v021}), $\chi(n_0,\mathbf{n}, {\frak R}', {E}/{x_1E}) = \chi(n_0,\mathbf{n}, {\frak R}', 0_V:x_1).$
So  by (\ref{v00}), we get
\begin{equation}\label{v020}\chi(n_0,\mathbf{n}, {\frak R}', {V}/({x_1V+E})) = \chi(n_0,\mathbf{n}, {\frak R}',
{V}/{x_1V})- \chi(n_0,\mathbf{n}, {\frak R}', 0_V: x_1).
\end{equation}
Since $x_1$ is  $(V/E)$-regular, by (ii),
$\chi(n_0,\mathbf{n}, {\frak R},
{V}/{E}) = \chi(n_0,\mathbf{n}, {\frak R}', {V}/({x_1V+E})).$
 On the other hand, we always have $$\chi(n_0,\mathbf{n}, {\frak R}, {V}/{E})=
\chi(n_0,\mathbf{n}, {\frak R}, V)-\chi(n_0,\mathbf{n}, {\frak R}, E)$$ by (\ref{v021}). Consequently  we
obtain
$$\chi(n_0,\mathbf{n}, {\frak R}', {V}/({x_1V+E})) = \chi(n_0,\mathbf{n}, {\frak R}, V)-\chi(n_0,\mathbf{n}, {\frak R}, E).$$
Now, since $x_1  \in \sqrt{\mathrm{Ann}_SE},$ it follows that $\chi(n_0,\mathbf{n}, {\frak R}, E) = 0$ by (i).  So
\begin{equation}\label{vv020}\chi(n_0,\mathbf{n}, {\frak R}', {V}/({x_1V+E})) = \chi(n_0,\mathbf{n}, {\frak R}, V).
\end{equation}
Finally, by (\ref{v020}) and (\ref{vv020}) we get  $$\chi(n_0,\mathbf{n}, {\frak R}, V) =\chi(n_0,\mathbf{n}, {\frak R}', V/x_1V)-
\chi(n_0,\mathbf{n}, {\frak R}', 0_V: x_1).$$
The proof is complete.\end{proof}

 Now, denote by $F(n_0, {\bf n}, J, V)$ the Hilbert polynomial of the Hilbert function $\ell_A\big((\mathcal{V}/\mathcal{V}')_{(n_0,{\bf n})}\big) =
\ell_A({V_{\mathbf{n}}}/{J^{n_0}V_{\mathbf{n}}}\big),$ and
   $\bigtriangleup^{(k_0,\;\mathbf{k})}F(n_0, {\bf n}, J, V)$ the $(k_0,\bf
k)$-difference of the polynomial $F(n_0, {\bf n}, J, V)$.
Then by \cite[Lemma 3.3]{TV4},
 $ \chi(n_0,\mathbf{n},\frak R, J, V)$ is a constant if and only if  $\bigtriangleup^{(k_0,\;\mathrm{\bf k})}F(n_0, {\bf n},J,  V)$ is a constant and in this case,
 \begin{equation}\label{v2020}
 \chi(n_0,\mathbf{n},\frak R, J, V) =\bigtriangleup^{(k_0,\;\mathrm{\bf k})}F(n_0, {\bf n}, J,  V).\end{equation}

\section{The Euler-Poincar\'{e} characteristic of joint
reductions}

This section is devoted to the discussion of the
Euler-Poincar\'{e} characteristic of joint
reductions of ideals, and gives the recursion formula for this characteristic with respect to joint reductions.

Let $(A, \frak{m})$ be a Noetherian local ring  with maximal ideal
$\mathfrak{m}$ and infinite residue field $k = A/\mathfrak{m}.$ Let $M$
be a finitely generated $A$-module. Let $ \mathrm{\bf I}= I_1,\ldots, I_d$ be ideals of $A.$
 Recall that  $I
= I_1\cdots I_d,$  and
 $\mathbb{I}^{\mathrm{\bf n}}= I_1^{n_1}\cdots I_d^{n_d}.$
   Let $t_1, \ldots, t_d$ be  variables over $A$. Set $T=t_1,
\ldots, t_d$ and $T^{\bf n}=t_1^{n_1}\cdots t_d^{n_d};$ $ \mathbf{e}_i = (0, \ldots,  \stackrel{(i)}{1},  \ldots, 0)\in  \mathbb{N}^{d}$
for all $1 \le i \le d;$ ${\bf 1}=(1,\ldots,1) \in  \mathbb{N}^{d}.$ Let $J$ be an $\frak
m$-primary ideal.
Denote by
$$\frak R(\mathrm{\bf I}; A) =
  \bigoplus_{\bf n\geqslant 0}\mathbb{ I}^{\mathrm{\bf n}}T^{\bf n}\; \text{ and }\; \frak R(\mathrm{\bf I}; M) =
   \bigoplus_{\bf n \geqslant 0}\mathbb{ I}^{\mathrm{\bf n}}MT^{\bf n} $$
  the Rees algebra of $\mathbf{I}$ and  the  Rees module of $\mathbf{I}$ with respect to $M$,
  respectively.

Recall that the notion of joint reductions in Rees's work in 1984 \cite{Re} first was developed
by O'Carroll in 1987 \cite{Oc}, and which was studied
in \cite{SH, Vi2,  Vi4,  VDT, VT4}.

\begin{definition} \label{de01}
Let $\frak I_i$ be a sequence  consisting $k_i$ elements of $I_i$
for all $1 \le i \le d$ and $k_1,\ldots,k_d \ge 0.$
 Put  ${\bf x} = \frak I_1, \ldots, \frak
 I_d$  and $(\emptyset) = 0_A$. Then ${\bf x}$ is called a {\it joint
reduction} of $\bf I$ with respect to $M$ of the type ${\bf k}
=(k_1,\ldots,k_d)$ if $\mathbb{I}^{\mathbf{n} }M =
\sum_{i=1}^d(\frak I_i) \mathbb{I}^{\mathbf{n} - \mathbf{e}_i}M$
for all large $\bf n.$  If $d=1$ then $(\frak I_1)$
is called a {\it reduction} of $I_1$ with respect to $M$
\cite{NR}.
\end{definition}

Let ${\bf x} = \mathfrak{I}_1, \ldots,
\mathfrak{I}_d, \mathfrak{I}_0$  be a joint reduction of
$\mathbf{I}, J$ with respect to $M$ of the type $({\bf k},
k_0+1)$, where $ \mathfrak{I}_0 \subset J,  \mathfrak{I}_i \subset
I_i$ for all $1 \le i \le d.$ Set $\mathbf{I}_T = I_1t_1, \ldots, I_dt_d$
and $${\bf x}_T = \mathfrak{I}_1t_1, \ldots,
\mathfrak{I}_dt_d, \mathfrak{I}_0.$$ Then it is easily seen
that $\mathbf{x}_T$ is  a joint reduction of $\mathbf{I}_T, J$
with respect to $\frak R(\mathbf{I}; M)  $ of the type $({\bf k},
k_0+1)$.
Denote by $F(n_0, \mathbf{n}, J, M)$ and $P(n_0, {\bf n}, J, \mathbf{I}, M)$ the Hilbert polynomials of the Hilbert functions
$\ell_A\Big(\frac{
  \mathbb{I}^{\mathbf{n}}M}{J^{n_0}\mathbb{I}^\mathbf{n}M}\Big)$ and $\ell_A\Big(\dfrac{J^{n_0}\mathbb{I}^{\bf
n}M}{J^{n_0+1}\mathbb{I}^{\bf n}M}\Big),$ respectively. Then it is obviously that
    $\bigtriangleup^{(k_0+1,\;\mathrm{\bf
k})}F(n_0, {\bf n}, J,  M) =
\bigtriangleup^{(k_0, \;\mathrm{\bf k})}P(n_0, {\bf n}, J,
\mathbf{I}, M)$ for any $(k_0, \mathbf{k})\in \mathbb{N}^{d+1}.$
Since $\bf x$ is a joint reduction of $\mathbf{I}, J$
of the type $({\bf k}, k_0+1)$, we get  by \cite[Proposition 2.2]{TV4}, that
$\bigtriangleup^{(k_0,\;\mathrm{\bf k})}P(n_0, {\bf n}, J,
\mathbf{I}, M)$ is a constant. Hence
$\bigtriangleup^{(k_0+1,\;\mathrm{\bf k})}F(n_0, {\bf n}, J, \mathbf{I}, M)$ is a constant. Therefore by (\ref{v2020}), $ \chi(n_0,\mathbf{n},\mathbf{x}_T, J, \frak
R(\mathbf{I}; M))$ is also a constant. In this case, put
\begin{equation}\label{vv8/3}\chi(n_0,\mathbf{n},\mathbf{x}_T, J, \frak
R(\mathbf{I}; M))=\chi(\mathbf{x}, J, \mathbf{I}, M),\end{equation} and $\chi(\mathbf{x}, J, \mathbf{I}, M)$ is called the {\it
Euler-Poincar\'{e} characteristic of $\bf x$
  with respect to $J,\bf I$ and $M$ of the type $(k_0+1,\mathrm{\bf
  k})$} (see \cite[Note 3.5]{TV4}). And \cite{TV4} yields also the following.

\begin{remark} \label{ree2020} We have the following facts.
\begin{itemize}
\item[$\mathrm{(i)}$] By \cite[Remark 3.4 (iii)]{TV4}, $\chi(\mathbf{x}, J, \mathbf{I}, M) \ge 0.$ And by \cite[Theorem 4.4 (i)]{TV4}, $\chi(\mathbf{x}, J, \mathbf{I}, M)=\bigtriangleup^{(k_0,\;\mathrm{\bf k})}P(n_0, {\bf n}, J,
\mathbf{I}, M).$
 \item[$\mathrm{(ii)}$]
By \cite[Proposition 3.7 (iii)]{TV4}, $\chi(\mathbf{x}, J, \mathbf{I}, M) \ge \chi(\mathbf{x}, J, \mathbf{I}, M')$ for any  submodule $M'$ of $M.$
\item[$\mathrm{(iii)}$] By \cite[Remark 3.6 (ii)]{TV4}, $\chi(\mathbf{x}, J, \mathbf{I}, I^kM) = \chi(\mathbf{x}, J, \mathbf{I}, M)$ for any  integer $k>0.$
\end{itemize}
\end{remark}

Let $a \in \frak m.$ Recall that $a$ is called an $M$-{\it regular element} if $0_M: a = 0_M.$ And
$a$ is called an {\it $I$-filter-regular element with respect to $M$} if
$0_M:a \subseteq 0_M: I^{\infty}.$

The following proposition is the main tool
for our approach in this paper.

\begin{proposition} \label{pru2020} Let ${\bf x}= x_1, \ldots, x_n$ be  a joint reduction  of
$\mathbf{I}, J$ with respect to $M$
 of the type $({\bf
k},k_0+1).$  Assume that $k_i > 0$ and $x_1 \in I_i$ for $1 \le i \le d.$  Set ${\bf x'}= x_2, \ldots, x_n.$ Then
$\chi(\mathbf{x}, J, \mathbf{I}, M)=\chi(\mathbf{x'}, J, \mathbf{I}, M/ x_1M)-\chi(\mathbf{x'}, J, \mathbf{I}, 0_M : x_1).$
\end{proposition}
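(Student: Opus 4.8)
The plan is to transfer the whole question to the multigraded Rees module and then apply the recursion for the Euler-Poincar\'e polynomial, Proposition~\ref{lem 3.4}(iii). Write $V=\frak R(\mathbf{I};M)=\bigoplus_{\mathbf{n}\geq\mathbf{0}}\mathbb{I}^{\mathbf{n}}MT^{\mathbf{n}}$, so that $\mathbf{x}_T$ is a joint reduction of $\mathbf{I}_T,J$ with respect to $V$ of type $(\mathbf{k},k_0+1)$ and, by (\ref{vv8/3}), $\chi(\mathbf{x},J,\mathbf{I},M)=\chi(n_0,\mathbf{n},\mathbf{x}_T,J,V)$. Since $k_i>0$ and $x_1\in I_i$, the element $x_1$ may be taken as the first element of the $i$-th block of $\mathbf{x}$, and the corresponding element of $\mathbf{x}_T$ is $y_1:=x_1t_i$, homogeneous of degree $\mathbf{e}_i$; after reordering we regard $y_1$ as the first entry of $\mathbf{x}_T$, with tail $\mathbf{x}_T'=\mathbf{x}_T\setminus\{y_1\}$. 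Proposition~\ref{lem 3.4}(iii) applied to $V$ and $y_1$ gives
$$\chi(n_0,\mathbf{n},\mathbf{x}_T,J,V)=\chi(n_0,\mathbf{n},\mathbf{x}_T',J,V/y_1V)-\chi(n_0,\mathbf{n},\mathbf{x}_T',J,0_V:y_1),$$
and its proof, together with Note~\ref{n2}(iii),(iv), shows that $\mathbf{x}_T'$ is a joint reduction of $\mathbf{I}_T,J$ with respect to both $V/y_1V$ and $0_V:y_1$. Reading this back at the level of $A$ (and using \cite[Proposition 2.2]{TV4} for the constancy of the relevant differences of Hilbert polynomials) one sees that $\mathbf{x}'=x_2,\dots,x_n$ is a joint reduction of $\mathbf{I},J$ with respect to $M/x_1M$ and to $0_M:x_1$, both of type $(\mathbf{k}-\mathbf{e}_i,k_0+1)$; in particular the right-hand side of the proposition is defined. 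So the whole task reduces to identifying the two terms on the right of the display with $\chi(\mathbf{x}',J,\mathbf{I},M/x_1M)$ and $\chi(\mathbf{x}',J,\mathbf{I},0_M:x_1)$.

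This is the crux. Computing graded pieces, $(V/y_1V)_{\mathbf{n}}\cong \mathbb{I}^{\mathbf{n}}M/x_1\mathbb{I}^{\mathbf{n}-\mathbf{e}_i}M$ while $\frak R(\mathbf{I};M/x_1M)_{\mathbf{n}}\cong \mathbb{I}^{\mathbf{n}}M/(\mathbb{I}^{\mathbf{n}}M\cap x_1M)$; because $x_1\in I_i$ we have $x_1\mathbb{I}^{\mathbf{n}-\mathbf{e}_i}M\subseteq \mathbb{I}^{\mathbf{n}}M\cap x_1M$, so there is a canonical graded surjection $V/y_1V\to \frak R(\mathbf{I};M/x_1M)$ with kernel $W$, where $W_{\mathbf{n}}=(\mathbb{I}^{\mathbf{n}}M\cap x_1M)/x_1\mathbb{I}^{\mathbf{n}-\mathbf{e}_i}M$. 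Localising at a prime $\mathfrak p\not\supseteq I$ turns every $\mathbb{I}^{\mathbf{n}}$ into the unit ideal of $A_{\mathfrak p}$, whence $(W_{\mathbf{n}})_{\mathfrak p}=x_1M_{\mathfrak p}/x_1M_{\mathfrak p}=0$; thus every $W_{\mathbf{n}}$ with $n_i\geq 1$ is supported on $V(I)$. Since $W$ is finitely generated over the standard graded algebra $\frak R(\mathbf{I};A)$ — automatically, as $\frak R(\mathbf{I};A)$ is Noetherian and $V/y_1V$ is finitely generated over it — this forces $W_{\mathbf{n}}=0$ for all $\mathbf{n}\gg\mathbf{0}$ (indeed a power of $I$ kills a suitable deep truncation of $W$, and $\frak R(\mathbf{I};A)$ is generated in degree one). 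Hence $(V/y_1V)_{\mathbf{n}}\cong \frak R(\mathbf{I};M/x_1M)_{\mathbf{n}}$ for $\mathbf{n}\gg\mathbf{0}$, so the two modules have the same Hilbert functions $\ell_A(\,\cdot\,_{\mathbf{n}}/J^{n_0}\,\cdot\,_{\mathbf{n}})$ for large $n_0,\mathbf{n}$, hence the same Hilbert polynomials, and by (\ref{v2020}) and \cite[Proposition 2.2]{TV4} we get $\chi(n_0,\mathbf{n},\mathbf{x}_T',J,V/y_1V)=\chi(\mathbf{x}',J,\mathbf{I},M/x_1M)$. The second term is entirely parallel: $(0_V:y_1)_{\mathbf{n}}=(\mathbb{I}^{\mathbf{n}}M\cap(0_M:x_1))T^{\mathbf{n}}$ contains the graded submodule $\frak R(\mathbf{I};0_M:x_1)_{\mathbf{n}}=\mathbb{I}^{\mathbf{n}}(0_M:x_1)T^{\mathbf{n}}$, the quotient is finitely generated over $\frak R(\mathbf{I};A)$ with every graded component supported on $V(I)$, hence vanishes for $\mathbf{n}\gg\mathbf{0}$, and therefore $\chi(n_0,\mathbf{n},\mathbf{x}_T',J,0_V:y_1)=\chi(\mathbf{x}',J,\mathbf{I},0_M:x_1)$. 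Substituting these into the display above yields the asserted recursion.

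The genuine obstacle is precisely this comparison step: $V/y_1V$ is not literally $\frak R(\mathbf{I};M/x_1M)$, and $0_V:y_1$ is not literally $\frak R(\mathbf{I};0_M:x_1)$; the content of the argument is that in each case the discrepancy is annihilated by a power of $I=I_1\cdots I_d$, hence vanishes in high multidegree, hence is invisible to $\chi$, which records only the top-degree part of the Hilbert polynomial. Everything else — which sequences remain joint reductions of $V/y_1V$, $0_V:y_1$ and their sub/quotients — is bookkeeping already supplied by Note~\ref{n2} and the proof of Proposition~\ref{lem 3.4}. It is also here that the two hypotheses enter: $x_1\in I_i$ is what makes the comparison maps graded (so that $x_1\mathbb{I}^{\mathbf{n}-\mathbf{e}_i}\subseteq\mathbb{I}^{\mathbf{n}}$), while $k_i>0$ lets $x_1$ be absorbed into the $i$-th block of $\mathbf{x}$ and guarantees $\mathbf{k}-\mathbf{e}_i\geq\mathbf{0}$.
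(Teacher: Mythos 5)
Your setup and the reduction to Proposition~\ref{lem 3.4}(iii) on the Rees module $V=\frak R(\mathbf{I};M)$ match the paper exactly. The gap is in what you yourself call ``the crux.'' You claim that the kernel $W$ of $V/y_1V\twoheadrightarrow\frak R(\mathbf{I};M/x_1M)$ satisfies $W_{\mathbf{n}}=0$ for $\mathbf{n}\gg\mathbf{0}$ because each $W_{\mathbf{n}}$ is supported on $V(I)$, so a power of $I$ kills a deep truncation of $W$, and $\frak R(\mathbf{I};A)$ is standard graded. That inference is wrong: $I^u$ sits in degree $\mathbf{0}$ of $\frak R(\mathbf{I};A)$, so $I^u W=0$ kills $W$ as an $A$-module but says nothing about the graded components; you would need $R_+^u W=0$ for the irrelevant ideal $R_+=\bigoplus_{\mathbf{n}\geq\mathbf{1}}\mathbb{I}^{\mathbf{n}}T^{\mathbf{n}}$, and $I^u\subset R_{\mathbf{0}}$ is not $R_+^u$. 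Concretely, take $A=k[s,t]_{(s,t)}$, $d=1$, $I_1=(s^2,st)$, $J=(s,t)$, $M=A$, and $\mathbf{x}=(s^2,t)$, a joint reduction of type $((1),1)$. Then for $n\geq 2$ one has $I_1^nM\cap x_1M=s^n(s,t)^n$ and $x_1I_1^{n-1}M=s^{n+1}(s,t)^{n-1}$, so $W_n\cong (s,t)^n/s(s,t)^{n-1}\neq 0$ for all $n\geq 2$, even though $IW=0$. The analogous claim for $(0_V:y_1)/\frak R(\mathbf{I};0_M:x_1)$ fails for the same reason: Artin--Rees only gives $\mathbb{I}^{\mathbf{n}}M\cap(0_M:x_1)\subseteq\mathbb{I}^{\mathbf{n}-k\mathbf{1}}(0_M:x_1)$, which is strictly larger than $\mathbb{I}^{\mathbf{n}}(0_M:x_1)$ in general.

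What actually has to be shown is not that the discrepancies vanish but that their $\chi$ vanishes, and the paper does this by two separate devices. For the quotient side, it writes the discrepancy as $F=\bigoplus_{\mathbf{n}\geq\mathbf{e}_1}(\mathbb{I}^{\mathbf{n}}M\cap x_1M/\mathbb{I}^{\mathbf{n}-\mathbf{e}_1}x_1M)T^{\mathbf{n}}$, shows $I^uF=0$, and then takes a homogeneous prime filtration $0=F_0\subset\cdots\subset F_t=F$ with $F_{i+1}/F_i\cong\frak R(\mathbf{I};A)/P_i$; because $I\subset P_i$ and $P_i$ is a homogeneous prime with $P_i\cap A\supset I$, one gets $\bigoplus_{\mathbf{n}\geq\mathbf{1}}\mathbb{I}^{\mathbf{n}}T^{\mathbf{n}}\subseteq P_i$, hence $(\frak R(\mathbf{I};A)/P_i)_{\mathbf{n}+\mathbf{1}}=0$ and each $\chi(n_0,\mathbf{n},\mathbf{x}_T',J,\frak R(\mathbf{I};A)/P_i)=0$; additivity then gives $\chi(\ldots,F)=0$. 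For the colon side it introduces, via Artin--Rees, the module $M'=I^kM\cap(0_M:x_1)$ for which $[0_V:y_1]_{\mathbf{n}+k\mathbf{1}}\cong\frak R(\mathbf{I};M')_{\mathbf{n}}$, and then sandwiches $I^k(0_M:x_1)\subseteq M'\subseteq(0_M:x_1)$, using the monotonicity of $\chi$ in the module (Remark~\ref{ree2020}(ii)) together with $\chi(\cdot,I^kN)=\chi(\cdot,N)$ (Remark~\ref{ree2020}(iii)) to squeeze out the desired equality. Both devices exploit that $\chi$ is a much coarser invariant than the graded module itself; your argument tries to make the graded modules literally agree in high degree, which they do not.
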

\begin{proof} Without loss of generality, we may assume that $k_1 > 0$ and $x_1 \in I_1.$
Set ${\bf x'}_T = {\bf x}_T \setminus \{x_1t_1\}.$ It is easily seen
that ${\bf x'}$ is a joint reduction of
$\mathbf{I}, J$ with respect to
$M/x_1M.$
Recall that $\mathbf{x}_T$ is  a joint reduction of $\mathbf{I}_T, J$
with respect to $\frak R(\mathbf{I}; M).$ Hence ${\bf x'}_T$ is a joint reduction of $\mathbf{I}_T, J$ with respect to
$\frak R(\mathrm{\bf I}; M)/x_1t_1 \frak R(\mathrm{\bf I}; M).$ Moreover, ${\bf x'}_T$ is a joint reduction of $\mathbf{I}_T, J$ with respect to
$(0_{\frak R(\mathrm{\bf I}; M)}:x_1t_1)$ by Note \ref{n2} (iv).
Since $(0_M: x_1)$ is an $A$-submodule of $M,$ by \cite[Corollary 2.8]{TV4}, ${\bf x}$  is also a joint reduction of
$\mathbf{I}, J$ with respect to $(0_M: x_1).$ Hence since $x_1(0_M: x_1) = 0,$ it follows that ${\bf x'}$ is a joint reduction of
$\mathbf{I}, J$ with respect to
$(0_M: x_1).$
Now, note that for any $a \in A,$ we have $$\{x \in \mathbb{ I}^{\mathrm{\bf n}}M |\; ax = 0_M \} = \mathbb{ I}^{\mathrm{\bf n}}M \cap (0_M:a).$$
So $0_{\frak R(\mathrm{\bf I}; M)} : x_1t_1=
\bigoplus_{\bf n \geqslant 0}[\mathbb{ I}^{\mathrm{\bf n}}M \cap (0_M : x_1)]T^{\bf n}.$
 By Artin-Rees Lemma, there exists
an integer $k>0$ such that
$\mathbb{I}^{\mathbf{n}+k\mathbf{1}}M\cap (0_M : x_1) =
\mathbb{I}^{\mathbf{n}}(I^{k}M\cap (0_M : x_1))$ for all ${\bf n \ge 0}.$
Fix this integer $k$ and set $M'= I^{k}M\cap (0_M : x_1).$
Then $$[0_{\frak R(\mathrm{\bf I}; M)} : x_1t_1]_{{\bf n} + k{\bf 1}} \cong_A [\frak R(\mathrm{\bf I}; M')]_{\bf n}$$ for all ${\bf n \ge 0}.$
On the other hand, recall that $\chi(n_0,\mathbf{n}, {\bf x'}_T, J, \frak R(\mathrm{\bf I}; M'))=\chi(\mathbf{x'}, J, \mathbf{I}, M')$ is a constant by (\ref{vv8/3}).
Hence
by \cite[Remark 3.4 (ii)]{TV4}, we have
\begin{equation}\label{vvvv}\chi(n_0,\mathbf{n}, {\bf x'}_T, J,0_{\frak R(\mathrm{\bf I}; M)} : x_1t_1) = \chi(n_0,\mathbf{n}, {\bf x'}_T, J, \frak R(\mathrm{\bf I}; M')).\end{equation}
Since $I^{k}(0_M : x_1) \subseteq M'= I^{k}M\cap (0_M : x_1) \subseteq (0_M : x_1),$ by Remark \ref{ree2020} (ii),
$$\chi(\mathbf{x'}, J, \mathbf{I}, I^{k}(0_M : x_1)) \le \chi(\mathbf{x'}, J, \mathbf{I}, M') \le \chi(\mathbf{x'}, J, \mathbf{I}, (0_M : x_1)).$$ And by Remark \ref{ree2020} (iii), $\chi(\mathbf{x'}, J, \mathbf{I}, I^{k}(0_M : x_1)= \chi(\mathbf{x'}, J, \mathbf{I}, (0_M : x_1)).$ Therefore
$$\chi(\mathbf{x'}, J, \mathbf{I}, M')= \chi(\mathbf{x'}, J, \mathbf{I}, (0_M : x_1)).$$ Hence
$\chi(n_0,\mathbf{n}, {\bf x'}_T, J, \frak R(\mathbf{I}; M')) = \chi(\mathbf{x'}, J, \mathbf{I}, (0_M : x_1)).$ So by (\ref{vvvv}),
\begin{equation}\label{v22} \chi(n_0,\mathbf{n}, {\bf x'}_T, J, 0_{\frak R(\mathrm{\bf I}; M)} : x_1t_1)=\chi(\mathbf{x'}, J, \mathbf{I}, (0_M : x_1)).
\end{equation}
 Next, we will show that
 \begin{equation}\label{vv2}\chi(n_0,\mathbf{n}, {\bf x'}_T, J,\frak R(\mathbf{I}; M)/x_1t_1\frak R(\mathrm{\bf I}; M)) = \chi(\mathbf{x'}, J, \mathbf{I}, M/ x_1M).\end{equation}
   Set
 $$E = x_1M\oplus[\bigoplus_{\bf n \ge \bf e_{1}}(\mathbb{ I}^{\mathrm{\bf n}}M\cap x_1M/\mathbb{ I}^{\mathrm{\bf n-e}_1}x_1M)T^{\bf n}];F= \bigoplus_{\bf n \ge \bf e_{1}}(\mathbb{ I}^{\mathrm{\bf n}}M\cap x_1M/\mathbb{ I}^{\mathrm{\bf n-e}_1}x_1M)T^{\bf n}.$$
      Note that
   $\frak R(\mathbf{I}; M/x_1M) \cong \bigoplus_{\bf n \ge \bf 0}(\mathbb{ I}^{\mathrm{\bf n}}M/\mathbb{ I}^{\mathrm{\bf n}}M\cap x_1M)T^{\bf n}$
   and $$\frak
R(\mathbf{I}; M)/x_1t_1\frak R(\mathrm{\bf I}; M) \cong M\oplus[\bigoplus_{\bf n \ge \bf e_{1}}(\mathbb{ I}^{\mathrm{\bf n}}M/\mathbb{ I}^{\mathrm{\bf n-e}_1}x_1M)T^{\bf n}].$$
So we have the following exact sequence of $\frak R(\mathbf{I}; A)$-modules:

\begin{equation}\label{vv}0\longrightarrow E \longrightarrow \frak
R(\mathbf{I}; M)/x_1t_1\frak R(\mathrm{\bf I}; M)\longrightarrow\frak R(\mathbf{I}; M/x_1M)\longrightarrow 0,\end{equation}
   Since ${\bf x'}$ is a joint reduction of
$\mathbf{I}, J$ with respect to
$M/x_1M,$ it follows that ${\bf x'}_T$ is a joint reduction of $\mathbf{I}_T, J$ with respect to
$\frak R(\mathrm{\bf I}; M/x_1M).$ And since ${\bf x'}_T$ is a joint reduction of $\mathbf{I}_T, J$ with respect to
$\frak R(\mathbf{I}; M)/x_1t_1\frak R(\mathrm{\bf I}; M),$ by Note \ref{n2} (iii), ${\bf x'}_T$ is a joint reduction of $\mathbf{I}_T, J$ with respect to $E.$

Recall that by Artin-Rees Lemma, there exists
an integer $u>0$ such that
$\mathbb{I}^{\mathbf{n}+u\mathbf{1}}M\cap  x_1M =
\mathbb{I}^{\mathbf{n}}(I^{u}M\cap x_1M)$ for all ${\bf n \ge 0}.$  Hence
$\mathbb{I}^{\mathbf{n}+u\mathbf{1}}M\cap  x_1M \subseteq
\mathbb{I}^{\mathbf{n}}x_1M$ for all ${\bf n \ge 0}.$ Note that
$$I^u[\mathbb{ I}^{\mathrm{\bf n}}M\cap x_1M] \subseteq \mathbb{I}^{\mathbf{n}+u\mathbf{1}}M\cap  x_1M$$ for all ${\bf n \ge 0}.$ Therefore
$I^u[\mathbb{ I}^{\mathrm{\bf n}}M\cap x_1M] \subseteq \mathbb{I}^{\mathbf{n}}x_1M$ for all ${\bf n \ge 0}.$
Consequently, $$I^uF = I^u[\bigoplus_{\bf n \ge \bf e_{1}}(\mathbb{ I}^{\mathrm{\bf n}}M\cap x_1M/\mathbb{ I}^{\mathrm{\bf n-e}_1}x_1M)T^{\bf n}] = 0.$$
So
$I \subset \sqrt{\mathrm{Ann}_{\frak R(\mathrm{\bf I}; A)}F}.$ Now, let
$0 = F_0 \subseteq F_1 \subseteq F_2 \subseteq\cdots \subseteq F_t=F$ be a homogeneous prime  filtration of $F,$ i.e., $F_{i+1}/F_i \cong \frak R(\mathrm{\bf I}; A)/P_i,$ where  $P_i \in \mathrm{Supp}F$ is a homogeneous prime ideal of $\frak R(\mathrm{\bf I}; A)$ for all $0 \le i \le t-1.$
Since ${\bf x'}_T$ is a joint reduction of $\mathbf{I}_T, J$ with respect to
$E,$ it follows by Note \ref{n2} (iii), that ${\bf x'}_T$ is a joint reduction of $\mathbf{I}_T, J$ with respect to $F.$ Consequently,
${\bf x'}_T$ is a joint reduction of $\mathbf{I}_T, J$ with respect to
$F_i$ for all $0 \le i \le t$ by  Note \ref{n2} (iii).
Hence ${\bf x'}_T$ is a joint reduction of $\mathbf{I}_T, J$ with respect to $\frak R(\mathrm{\bf I}; A)/P_i$ for all $0 \le i \le t-1.$
Then by (\ref{v021}), we have
\begin{equation}\label{26/7} \chi(n_0,\mathbf{n}, {\bf x'}_T, J, F) =\sum_{i=0}^{t-1}\chi(n_0,\mathbf{n}, {\bf x'}_T, J, \frak R(\mathrm{\bf I}; A)/P_i). \end{equation}
It is well known that $\mathrm{Min}F \subset \{P_0, \ldots, P_{t-1} \} \subset \mathrm{Supp}F.$
Now, let $\frak P \in \{P_0, \ldots, P_{t-1} \}.$ Then there exists $P\in \mathrm{Min}F$ such that
$P \subset \frak P.$ Since $P\in \mathrm{Min}F, $ it follows that  $P$ is an associated prime ideal of $F.$ Hence
 there exists a homogeneous element $x \in F$ such that $P = 0_F:x.$ Set $\frak p = P\cap A= \{a \in A \mid ax = 0_F\}.$
 Then it can be verified that
 $P = \bigoplus_{\bf n \geq 0}(\mathbb{ I}^{\mathrm{\bf n}}\cap \frak p)T^{\bf n}.$
Recall that  $I \subset \sqrt{\mathrm{Ann}_{\frak R(\mathrm{\bf I}; A)}F}$ and $P\in \mathrm{Supp}F.$ Hence $I \subset  P.$ So $I \subset  \frak p.$
From this it follows that $\bigoplus_{\bf n \geq 1}\mathbb{ I}^{\mathrm{\bf n}}T^{\bf n} \subseteq P.$ Consequently, $\bigoplus_{\bf n \geq 1}\mathbb{ I}^{\mathrm{\bf n}}T^{\bf n} \subseteq \frak P.$ Therefore, $(\frak R(\mathrm{\bf I}; A)/\frak P)_{{\bf n} + {\bf 1}} = 0$ for all $\bf n.$
And in this case, we get
$\chi(n_0,\mathbf{n}, {\bf x'}_T, J, \frak R(\mathrm{\bf I}; A)/\frak P)=0$ by \cite[Remark 3.4 (ii)]{TV4}.
So
$\chi(n_0,\mathbf{n}, {\bf x'}_T, J, \frak R(\mathrm{\bf I}; A)/P_i) = 0$ for all $0 \le i \le t-1.$ Hence $\chi(n_0,\mathbf{n}, {\bf x'}_T, J, F) =0$ by  (\ref{26/7}). Next, since $E_{\bf n}=F_{\bf n}$ for all $\bf n \ge \bf e_{1},$ by \cite[Remark 3.4 (ii)]{TV4}, we have
$\chi(n_0,\mathbf{n},\mathbf{x'}_T, J, E) = \chi(n_0,\mathbf{n},\mathbf{x'}_T, J, F).$ Thus,
$\chi(n_0,\mathbf{n},\mathbf{x'}_T, J, E) = 0.$
Hence from (\ref{vv}), we obtain by (\ref{v021}), that
 $$\chi(n_0,\mathbf{n}, {\bf x'}_T, J,\frak R(\mathbf{I}; M)/x_1t_1\frak R(\mathrm{\bf I}; M)) = \chi(n_0,\mathbf{n}, {\bf x'}_T, J, \frak R(\mathbf{I}; M/x_1M)).$$
 Recall that $\chi(n_0,\mathbf{n}, {\bf x'}_T, J, \frak R(\mathbf{I}; M/x_1M)) = \chi(\mathbf{x'}, J, \mathbf{I}, M/ x_1M)$
   by (\ref{vv8/3}). So we get (\ref{vv2}):
 $\chi(n_0,\mathbf{n}, {\bf x'}_T, J,\frak R(\mathbf{I}; M)/x_1t_1\frak R(\mathrm{\bf I}; M)) = \chi(\mathbf{x'}, J, \mathbf{I}, M/ x_1M).$
 Combining (\ref{vv8/3}), (\ref{v22}) and (\ref{vv2}) yields, by Proposition \ref{lem 3.4} (iii), that
  $$\chi(\mathbf{x}, J, \mathbf{I}, M)=\chi(\mathbf{x'}, J, \mathbf{I}, M/ x_1M)-\chi(\mathbf{x'}, J, \mathbf{I}, (0_M : x_1)).$$
   \end{proof}

\section{Mixed multiplicities of ideals}

In this section, we prove the recursion formula for mixed multiplicities of ideals, and
using this recursion formula, we give results on the relationship between mixed multiplicities and the Hilbert-Samuel multiplicity of joint reductions.

Recall that $I = I_1\cdots I_d;$
  $\overline {M}= M/0_M: I^\infty;$ $q=\dim \overline {M}$; $\mathrm{\bf I}^{[\mathrm{\bf
k}]}= I_1^{[k_1]},
  \ldots,I_d^{[k_d]}.$
Denote by $P(n_0, {\bf n}, J, \mathbf{I}, M)$ the Hilbert polynomial of the function
 $\ell_A\Big(\dfrac{J^{n_0}\mathbb{I}^{\bf
n}M}{J^{n_0+1}\mathbb{I}^{\bf n}M}\Big).$ Assign the dimension $-\infty$ to the module $0$ and
the degree $-\infty$ to the zero polynomial. Note that by Remark \ref{no4.3a} (iii), $\dim \overline {M} \not = 0.$
Then by \cite[Proposition
3.1]{Vi} (see \cite{MV}),
  $P(n_0, {\bf n}, J,  \mathbf{I}, M)$
 is a  polynomial of total
degree $q-1.$ One  can
write
$$P(n_0, {\bf n}, J, \mathbf{I}, M) = \sum_{(k_0, \;\mathbf{k})\in \mathbb{N}^{d+1}
}e(J^{[k_0+1]}, \mathbf{I}^{[\mathbf{k}]}; M)\binom{n_0 +
k_0}{k_0}\binom{\mathbf{n}+\mathbf{k}}{\mathbf{k}},$$
 here
$\binom{\mathbf{n + k}}{\bf n}= \binom{n_1 + k_1}{n_1}\cdots
\binom{n_d + k_d}{n_d}.$
 It is clear that
$\bigtriangleup^{(k_0,\; \mathrm{\bf
k})}P(n_0, {\bf n}, J,  \mathbf{I}, M)$ is a constant if and only if
 $e(J^{[h_0+1]},
\mathbf{I}^{[\mathbf{h}]}; M)=0$ for all $(h_0, \mathbf{h}) >
(k_0,\mathbf{k}).$ In this case,
\begin{equation}\label{vv25}\bigtriangleup^{(k_0, \;\mathrm{\bf
k})}P(n_0, {\bf n}, J,  \mathbf{I}, M)= e(J^{[k_0+1]}, \mathbf{I}^{[\mathbf{k}]}; M) \end{equation} (see \cite[Proposition 2.4 (ii)]{htv}).
And \cite{htv} gave a larger class than the class of original
mixed multiplicities, that $e(J^{[k_0+1]},
\mathbf{I}^{[\mathbf{k}]}; M)$ is  the {\it  mixed
multiplicity of
 maximal degrees of $M$ with respect to ideals  $J,\mathrm{\bf I}$ of the type
$(k_0+1,\mathrm{\bf k})$} if $e(J^{[h_0+1]},
\mathbf{I}^{[\mathbf{h}]}; M)=0$ for all $(h_0, \mathbf{h}) >
(k_0,\mathbf{k}).$ By \cite[Proposition 2.7]{TV4} (see \cite[Remark 4.3]{TV4}),
 the mixed
multiplicity of  maximal degrees
 $e(J^{[k_0+1]},
\mathbf{I}^{[\mathbf{k}]}; M)$
 is defined if and only if
there exists  a joint
reduction of
$\mathbf{I}, J$ with respect to $M$
 of the type $({\bf
k},k_0+1),$
 and then  we have
\begin{equation}\label{vv221}e(J^{[k_0+1]},
\mathbf{I}^{[\mathbf{k}]}; M) \ge 0 \end{equation}
by \cite[Proposition 2.4 (i)]{htv} (see \cite[Note 2.10]{TV4}).

And as a translation of Proposition \ref{pru2020} into the language of the mixed multiplicity, we get the following recursion formula for mixed multiplicities.

\begin{theorem} \label{le2020} Let ${\bf x}= x_1, \ldots, x_n$ be  a joint reduction  of
$\mathbf{I}, J$ with respect to $M$
 of the type $({\bf
k},k_0+1).$  Assume that $k_i > 0$ and $x_1 \in I_i$ for $1 \le i \le d.$ Then
$$e(J^{[k_0+1]}, \mathbf{I}^{[\mathbf{k}]}; M) =
e(J^{[k_0+1]}, \mathbf{I}^{[\mathrm{\bf k} - \mathbf{e}_i]};
 M/x_1M) - e(J^{[k_0+1]}, \mathbf{I}^{[\mathrm{\bf k} - \mathbf{e}_i]};
 0_M: x_1).$$
\end{theorem}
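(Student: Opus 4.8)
The plan is to read off the statement from Proposition \ref{pru2020} by matching each of the three Euler--Poincar\'e characteristics appearing there with the corresponding mixed multiplicity of maximal degrees. The only real content is bookkeeping with the \emph{types} of the joint reductions, so that Remark \ref{ree2020}(i) and the identity (\ref{vv25}) can be applied.

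First I would reduce to the case $i=1$: write $\mathbf{x} = \mathfrak{I}_1, \ldots, \mathfrak{I}_d, \mathfrak{I}_0$ with $\mathfrak{I}_0 \subset J$, $\mathfrak{I}_j \subset I_j$, $\#\mathfrak{I}_0 = k_0+1$, $\#\mathfrak{I}_j = k_j$, and assume $k_1 > 0$ and $x_1 \in \mathfrak{I}_1 \subset I_1$, exactly as in the proof of Proposition \ref{pru2020}. Since $\mathbf{x}$ is a joint reduction of $\mathbf{I},J$ with respect to $M$ of type $(\mathbf{k},k_0+1)$, the discussion preceding (\ref{vv8/3}) shows that $\bigtriangleup^{(k_0,\mathbf{k})}P(n_0,\mathbf{n},J,\mathbf{I},M)$ is a constant, that $\chi(\mathbf{x},J,\mathbf{I},M)$ is defined, and hence by Remark \ref{ree2020}(i) together with (\ref{vv25}) that $\chi(\mathbf{x},J,\mathbf{I},M) = e(J^{[k_0+1]},\mathbf{I}^{[\mathbf{k}]};M)$.

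Next I would analyze $\mathbf{x}' = x_2,\ldots,x_n$. Deleting $x_1 \in \mathfrak{I}_1$ leaves $k_1-1$ elements of $I_1$, $k_j$ elements of $I_j$ for $j\neq 1$, and $k_0+1$ elements of $J$; so $\mathbf{x}'$ is a joint reduction of $\mathbf{I},J$ with respect to $M/x_1M$ of type $(\mathbf{k}-\mathbf{e}_1, k_0+1)$. Thus $\chi(\mathbf{x}',J,\mathbf{I},M/x_1M)$ is defined and, again by Remark \ref{ree2020}(i) and (\ref{vv25}), equals $e(J^{[k_0+1]},\mathbf{I}^{[\mathbf{k}-\mathbf{e}_1]};M/x_1M)$. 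Likewise, by \cite[Corollary 2.8]{TV4} the sequence $\mathbf{x}$ is a joint reduction of $\mathbf{I},J$ with respect to $0_M:x_1$, and since $x_1(0_M:x_1)=0$ the same holds for $\mathbf{x}'$, of type $(\mathbf{k}-\mathbf{e}_1,k_0+1)$; hence $\chi(\mathbf{x}',J,\mathbf{I},0_M:x_1) = e(J^{[k_0+1]},\mathbf{I}^{[\mathbf{k}-\mathbf{e}_1]};0_M:x_1)$.

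Finally I would invoke Proposition \ref{pru2020}, which gives $\chi(\mathbf{x},J,\mathbf{I},M) = \chi(\mathbf{x}',J,\mathbf{I},M/x_1M) - \chi(\mathbf{x}',J,\mathbf{I},0_M:x_1)$, and substitute the three identifications above to obtain the claimed formula (with $i=1$, and in general with the fixed index $i$). The only delicate point is making sure each of the three characteristics is defined, i.e. that the relevant iterated difference of the Hilbert polynomial is constant; this is precisely what the ``joint reduction of the correct type'' observations guarantee. I expect no serious obstacle here, since the substantive work is already contained in Proposition \ref{pru2020}.
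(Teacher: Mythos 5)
Your proposal is correct and follows essentially the same route as the paper's proof: set $\mathbf{x}'=x_2,\ldots,x_n$, verify (via the same bookkeeping and \cite[Corollary 2.8]{TV4} plus $x_1(0_M:x_1)=0$) that all three sequences are joint reductions of the right types, identify each Euler--Poincar\'e characteristic with the corresponding mixed multiplicity of maximal degrees, and then read off the formula from Proposition~\ref{pru2020}. The only cosmetic difference is that the paper cites \cite[Theorem 4.4 (i)]{TV4} directly for the identification $\chi=e(\cdot)$, while you route through Remark~\ref{ree2020}(i) together with~(\ref{vv25}); these amount to the same thing.
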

\begin{proof} Set $\mathbf{x'} = x_2, \ldots, x_n.$ Since ${\bf x}$ is  a joint reduction  of
$\mathbf{I}, J$ with respect to $M$
 of the type $({\bf
k},k_0+1),$ it is easily seen
 that $\mathbf{x'}$ is a joint reduction  of
$\mathbf{I}, J$ with respect to $M/x_1M$ of the type $({\bf
k-e}_i,k_0+1).$ Since $(0_M: x_1)$ is an $A$-submodule of $M,$ by \cite[Corollary 2.8]{TV4},
 ${\bf x}$  is a joint reduction of
$\mathbf{I}, J$ with respect to $(0_M: x_1).$  Therefore, since $x_1(0_M: x_1) = 0,$ it follows that ${\bf x'}$ is a joint reduction of
$\mathbf{I}, J$ with respect to
$(0_M: x_1)$ of the type $({\bf
k-e}_i,k_0+1).$
 Hence  by \cite[Theorem 4.4 (i)]{TV4},
\begin{align*} \chi(\mathbf{x}, J, \mathbf{I}, M)&= e(J^{[k_0+1]}, \mathbf{I}^{[\mathbf{k}]}; M);\\
\chi(\mathbf{x'}, J, \mathbf{I}, M/ x_1M) &=e(J^{[k_0+1]}, \mathbf{I}^{[\mathrm{\bf k} - \mathbf{e}_i]}; M/x_1M);\\
\chi(\mathbf{x'}, J, \mathbf{I}, 0_M : x_1)&=e(J^{[k_0+1]}, \mathbf{I}^{[\mathrm{\bf k} - \mathbf{e}_i]};  0_M: x_1).
\end{align*}
Consequently, by Proposition \ref{pru2020}, we immediately get the proof of Theorem \ref{le2020}.
\end{proof}

From the Theorem \ref{le2020}, we easily see the following consequences.

\begin{corollary}\label{vt5/3} Let ${\bf x}= x_1, \ldots, x_n$ be  a joint reduction  of
$\mathbf{I}, J$ with respect to $M$
 of the type $({\bf
k},k_0+1).$  Assume that $k_i > 0$ and $x_1 \in I_i$ for $1 \le i \le d.$ Then we have
\begin{itemize}
\item[$\mathrm{(i)}$]
$e(J^{[k_0+1]}, \mathbf{I}^{[\mathbf{k}]}; M)\le
e(J^{[k_0+1]}, \mathbf{I}^{[\mathrm{\bf k} - \mathbf{e}_i]};
 M/x_1M).$
  \item[$\mathrm{(ii)}$] $e(J^{[k_0+1]}, \mathbf{I}^{[\mathbf{k}]}; M)=
e(J^{[k_0+1]}, \mathbf{I}^{[\mathrm{\bf k} - \mathbf{e}_i]};
 M/x_1M)$  if $x_1$ is an $M$-regular element.
 \item[$\mathrm{(iii)}$] $e(J^{[k_0+1]}, \mathbf{I}^{[\mathbf{k}]}; M)=
e(J^{[k_0+1]}, \mathbf{I}^{[\mathrm{\bf k} - \mathbf{e}_i]};
 M/x_1M)$ if $x_1$ is an $I$-filter-regular element with respect to $M.$
\end{itemize}
\end{corollary}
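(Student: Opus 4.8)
The plan is to derive Corollary~\ref{vt5/3} directly from the recursion formula in Theorem~\ref{le2020}, treating the three parts in sequence and exploiting the non-negativity of mixed multiplicities of maximal degrees recorded in (\ref{vv221}). First, for part (i), I would apply Theorem~\ref{le2020} to rewrite
\[
e(J^{[k_0+1]}, \mathbf{I}^{[\mathbf{k}]}; M) =
e(J^{[k_0+1]}, \mathbf{I}^{[\mathrm{\bf k} - \mathbf{e}_i]}; M/x_1M) - e(J^{[k_0+1]}, \mathbf{I}^{[\mathrm{\bf k} - \mathbf{e}_i]}; 0_M: x_1),
\]
and then observe that the subtracted term on the right is non-negative: indeed ${\bf x'} = x_2,\ldots,x_n$ is a joint reduction of $\mathbf{I}, J$ with respect to $0_M : x_1$ of the type $({\bf k}-\mathbf{e}_i, k_0+1)$ (this is already checked in the proof of Theorem~\ref{le2020}), so by \cite[Proposition 2.7]{TV4} the mixed multiplicity $e(J^{[k_0+1]}, \mathbf{I}^{[\mathrm{\bf k} - \mathbf{e}_i]}; 0_M: x_1)$ is defined and by (\ref{vv221}) it is $\ge 0$. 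Dropping it yields the desired inequality.

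Next, for parts (ii) and (iii), the strategy is to show that under each hypothesis the correction term $e(J^{[k_0+1]}, \mathbf{I}^{[\mathrm{\bf k} - \mathbf{e}_i]}; 0_M: x_1)$ actually vanishes, whence equality in the recursion formula follows at once. For (ii): if $x_1$ is an $M$-regular element then $0_M : x_1 = 0_M = 0$, so $e(J^{[k_0+1]}, \mathbf{I}^{[\mathrm{\bf k} - \mathbf{e}_i]}; 0_M: x_1) = e(J^{[k_0+1]}, \mathbf{I}^{[\mathrm{\bf k} - \mathbf{e}_i]}; 0) = 0$, since the zero module has been assigned dimension $-\infty$ and hence contributes a zero Hilbert polynomial. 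For (iii): if $x_1$ is an $I$-filter-regular element with respect to $M$, then $0_M : x_1 \subseteq 0_M : I^\infty$, so $I^\infty (0_M : x_1) = 0$, i.e. $0_M : x_1$ is annihilated by a power of $I$; passing to the Rees module $\frak R(\mathbf{I}; 0_M : x_1)$, this means $I \subset \sqrt{\mathrm{Ann}_{\frak R(\mathbf{I}; A)} \frak R(\mathbf{I}; 0_M : x_1)}$, and then the same argument used inside the proof of Proposition~\ref{pru2020} (via Proposition~\ref{lem 3.4}(i), applied to the element $x_1 t_1$ or to a suitable element of a prime filtration) shows $\chi(\mathbf{x'}, J, \mathbf{I}, 0_M : x_1) = 0$, hence by \cite[Theorem 4.4 (i)]{TV4} that $e(J^{[k_0+1]}, \mathbf{I}^{[\mathrm{\bf k} - \mathbf{e}_i]}; 0_M: x_1) = 0$. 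Substituting into Theorem~\ref{le2020} gives equality in both cases.

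The main obstacle I anticipate is part (iii): one must be careful that ``$0_M : x_1 \subseteq 0_M : I^\infty$'' really does force the relevant Euler--Poincar\'e characteristic (equivalently, the top mixed multiplicity) to be zero. The cleanest route is to note that $\dim \overline{0_M : x_1} < \dim \overline M$ — in fact $\overline{0_M : x_1} = (0_M : x_1)/(0_M : x_1 \cap 0_M : I^\infty) = 0$ since $0_M : x_1 \subseteq 0_M : I^\infty$ already — so that the Hilbert polynomial $P(n_0, {\bf n}, J, \mathbf{I}, 0_M : x_1)$ is identically zero, and therefore every mixed multiplicity of it, in particular $e(J^{[k_0+1]}, \mathbf{I}^{[\mathrm{\bf k} - \mathbf{e}_i]}; 0_M: x_1)$, vanishes; this avoids re-running the prime-filtration argument and uses only the definitions recalled at the start of Section~4. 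A minor point to keep straight throughout is the bookkeeping of types: after killing $x_1 \in I_i$ the relevant type becomes $({\bf k} - \mathbf{e}_i, k_0+1)$, and the hypothesis $k_i > 0$ for all $1 \le i \le d$ guarantees ${\bf k} - \mathbf{e}_i \in \mathbb{N}^d$, so all the objects appearing are legitimate.
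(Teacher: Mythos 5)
Your proposal is correct and takes essentially the same route as the paper: for each part you plug into Theorem~\ref{le2020}, then use the non-negativity (\ref{vv221}) for (i), the vanishing $0_M:x_1=0$ for (ii), and for (iii) the observation that $0_M:x_1\subseteq 0_M:I^\infty$ forces $I^s(0_M:x_1)=0$ for some $s>0$ and hence $P(n_0,{\bf n},J,\mathbf{I},0_M:x_1)=0$, so the correction term vanishes. Your alternative sketch for (iii) via the Rees-module/prime-filtration argument is also workable but unnecessary, and you rightly settle on the direct Hilbert-polynomial vanishing, which is exactly the paper's argument.
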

\begin{proof} Since $e(J^{[k_0+1]}, \mathbf{I}^{[\mathrm{\bf k} - \mathbf{e}_i]};
 0_M: x_1) \ge 0$ by (\ref{vv221}), we have (i)
  by Theorem \ref{le2020}. Now, if $x_1$ is an $M$-regular element, then $(0_M : x_1) = 0_M.$ Hence
  $$e(J^{[k_0+1]}, \mathbf{I}^{[\mathrm{\bf k} - \mathbf{e}_i]};
 0_M: x_1) = 0.$$ So by Theorem \ref{le2020}, we get (ii).
 If $x_1$ is $I$-filter-regular with respect to $M,$ then
$0_M:x_1 \subseteq 0_M: I^{\infty}.$ Since $M$ is Noetherian, it follows that there exists
an integer $s>0$ such that $I^s[0_M:x_1] = 0.$
From this it follows that the Hilbert polynomial $$P(n_0, {\bf n}, J, \mathbf{I}, [0_M:x_1]) = 0.$$
 Therefore, $e(J^{[k_0+1]}, \mathbf{I}^{[\mathrm{\bf k} - \mathbf{e}_i]};
 0_M: x_1) = 0.$ Thus, we obtain (iii)
   by Theorem \ref{le2020}.
 \end{proof}

In order to prove the next results, we need the
following facts.

Recall that an ideal $\frak{a}$ of $A$ is called an {\it ideal of definition} of $M$ if $\ell_A(M/\frak{a}M)<\infty,$
and a sequence ${\bf y} = y_1, \ldots, y_n$ of elements
in $ \frak m$ is called a {\it multiplicity system} of $M$ if
$(\bf y)$ is an ideal of definition of $M.$ Let ${\bf y}$ be a
multiplicity system. Then one defined the {\it multiplicity symbol}
of ${\bf y}$ as follows: if $n=0$, then $\ell_A(M) <\infty$, and
set $e(\mathbf{y};M) = \ell_A(M)$. If $n> 0$, set $e(\mathbf{y};
M) = e(\mathbf{y}'; M/y_1M) - e(\mathbf{y}'; 0_{M}:y_1),$ here $\mathbf{y}'= y_2, \ldots, y_n.$
It
is well known that $e(\mathbf{y}; M) \not=0$ if and only if   $\bf
y$ is a system of parameters for $M$, and in this case,
$e(\mathbf{y};  M) = e((\mathbf{y});  M)$ (see e.g.
\cite{BH1}).

\begin{remark}\label{no4.3a} With the previous notions $\mathbf{I},J$ and assume that
  $\bf x$ is a joint
reduction of  $\mathbf{I},J$ with respect to $M$ of the type $(
{\bf 0}, k_0+1).$  Then we have the following.
\begin{enumerate}[{\rm (i)}]
\item Since $\bf x$ is a joint
reduction of the type $( {\bf 0}, k_0+1),$ it follows that $(\bf x)$ is a
reduction of $J$ with respect to $I^mM$ for large enough $m.$ Recall that $J$ is $\frak m$-primary.
So $({\bf x})$ is an ideal of definition of $I^mM$ for large enough $m.$ Note that
$\sqrt{\mathrm{Ann}[\overline
{M}/(\mathbf{x})\overline {M}}]=
\sqrt{\mathrm{Ann}[I^mM/(\mathbf{x})I^mM]}$ for all large
enough  $m.$ Hence $({\bf x})$ is an ideal of definition of
$\overline {M}.$
 \item  By (i), $\mathbf{x}$ is a
multiplicity system of $\overline{M}.$ So $\dim \overline{M} \le |\mathbf{x}| = k_0+1.$
Then if $\dim \overline{M} = k_0+1,$  then
${\bf x}$ is a system of parameters for $ \overline{M}.$ Hence by \cite[Lemma
3.2 (i)]{Vi},
$e(J^{[k_0+1]},
\mathbf{I}^{[\mathbf{0}]}; M) = e(J;
\overline{M}) =  e(\mathbf{x};
\overline{M}).$
If $\dim \overline{M} < k_0+1,$ then $\deg P(n_0, {\bf n}, J,  \mathbf{I}, M) < k_0.$ Hence
$e(J^{[k_0+1]},
\mathbf{I}^{[\mathbf{0}]}; M) = 0$ by (\ref{vv25}), and ${\bf x}$ is not a system of parameters for $ \overline{M}.$
In this case, $e(\mathbf{x};
\overline{M}) =0.$
 So we always  get
$e(J^{[k_0+1]},
\mathbf{I}^{[\mathbf{0}]}; M) = e(\mathbf{x};
\overline{M}).$ This can also be seen in \cite[Lemma 2.6 (ii)]{TV3}.
  \item Since $0_M: I^\infty = [0_M: I^\infty]:I,$ it follows that if $\overline{M} \not =0,$ then  $\dim \overline{M}> 0.$ Hence
    $\dim \overline{M} \le 0$ if and only if $\overline{M} =0,$ i.e., $\dim \overline{M} = -\infty.$
        This is equivalent to $I \subseteq \sqrt{\mathrm{Ann}M}$ (see \cite[Lemma 2.9 (i)]{TV4}). So we always have $\dim \overline{M} \not =0.$

   \end{enumerate}
\end{remark}

 To illustrate further the effectiveness of Theorem \ref{le2020}, we consider the case of ${\frak m}$-primary ideals by the following note.

\begin{note}\label{note.1} Let $I_1, \ldots, I_d, J$ be ${\frak m}$-primary ideals.
Let ${\bf x}= x_1, \ldots, x_n$ be  a joint reduction  of
$\mathbf{I}, J$ with respect to $M$
 of the type $({\bf
k},k_0+1).$ In this case, $I$ is an ${\frak m}$-primary ideal. Then
it is easy to check that ${\bf x}$ is a
multiplicity system of $\overline {M}$ and $M$ and  $e({\bf x}; M)= e({\bf x}; \overline {M}).$ Now, using Theorem \ref{le2020}, we prove by induction on $|{\bf k}|$ that $e(J^{[k_0+1]}, \mathbf{I}^{[\mathbf{k}]}; M) =e({\bf x}; M).$ Indeed, if $|{\bf k}|=0,$ then ${\bf x}$ is  a joint reduction  of
$\mathbf{I}, J$ with respect to $M$
 of the type $({\bf 0}, k_0+1).$ By Remark \ref{no4.3a} (ii), $e(J^{[k_0+1]}, \mathbf{I}^{[\mathbf{0}]}; M) =e({\bf x}; \overline {M}).$ So
$e(J^{[k_0+1]}, \mathbf{I}^{[\mathbf{0}]}; M) =e({\bf x}; {M}).$
 Consider the case that $|{\bf k}|> 0.$ And without loss of generality, we can assume that
$k_1>0$ and $x_1\in I_1.$ By Theorem \ref{le2020}, $e(J^{[k_0+1]}, \mathbf{I}^{[\mathbf{k}]}; M) =
e(J^{[k_0+1]}, \mathbf{I}^{[\mathrm{\bf k} - \mathbf{e}_1]};
 M/x_1M) - e(J^{[k_0+1]}, \mathbf{I}^{[\mathrm{\bf k} - \mathbf{e}_1]};
 0_M: x_1).$ Since ${\bf x'}= x_2, \ldots, x_n$ is  a joint reduction  of
$\mathbf{I}, J$ with respect to $M/x_1M$ and $0_M: x_1$
 of the type $({\bf
k-e}_1,k_0+1),$ by the
inductive hypothesis, we have \begin{align*}e(J^{[k_0+1]}, \mathbf{I}^{[\mathrm{\bf k} - \mathbf{e}_1]};
 M/x_1M) &= e({\bf x'}; {M/x_1M});\\ e(J^{[k_0+1]}, \mathbf{I}^{[\mathrm{\bf k} - \mathbf{e}_1]};
 0_M: x_1) &= e({\bf x'}; 0_M: x_1).\end{align*} Recall that $e({\bf x}; M)= e({\bf x'}; {M/x_1M})-e({\bf x'}; 0_M: x_1).$
  So
  $e(J^{[k_0+1]}, \mathbf{I}^{[\mathbf{k}]}; M) =  e({\bf x}; M).$
     Therefore, we recover the Rees's mixed multiplicity theorem in \cite{Re}.
\end{note}

In addition, as an immediate consequence of Corollary \ref {vt5/3} (i) and (iii), we also obtain the following results.

\begin{corollary}\label{vt15/7}  Let ${\bf x}= x_1, \ldots, x_n$ be  a joint reduction  of
$\mathbf{I}, J$ with respect to $M$
 of the type $({\bf
k},k_0+1)$ with
${\bf x}_{\mathbf{I}} =x_1,\ldots,x_{|{\bf k }|} \subset {\bf I}$ and $U =x_{|{\bf k }|+1},\ldots,x_n \subset J.$
Set $I= I_1\cdots I_d.$ Then
\begin{itemize}
\item[$\mathrm{(i)}$] $e(J^{[k_0 +1]}, \mathbf{I}^{[\mathbf{k}]}; M)
\le e(U; {M}/({\bf x}_{\mathbf{I}})M:I^\infty).$
\item[$\mathrm{(ii)}$] $e(J^{[k_0 +1]}, \mathbf{I}^{[\mathbf{k}]}; M)
= e(U; {M}/({\bf x}_{\mathbf{I}})M:I^\infty)$ if ${\bf x}_{\mathbf{I}}$ is an $I$-filter-regular sequence with respect to $M.$
\end{itemize}
\end{corollary}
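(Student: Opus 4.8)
The plan is to prove (i) and (ii) together by induction on $|\mathbf{k}|$, peeling the elements of $\mathbf{x}_{\mathbf{I}}$ off one at a time via Corollary \ref{vt5/3}, exactly in the spirit of the inductive argument of Note \ref{note.1}, and closing the induction with Remark \ref{no4.3a}(ii).

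In the base case $|\mathbf{k}| = 0$ one has $\mathbf{x}_{\mathbf{I}} = (\emptyset)$, so $(\mathbf{x}_{\mathbf{I}})M = 0_M$, the sequence $U = \mathbf{x}$ is a joint reduction of $\mathbf{I}, J$ with respect to $M$ of type $(\mathbf{0}, k_0+1)$, and $M/(\mathbf{x}_{\mathbf{I}})M : I^\infty = \overline{M}$; Remark \ref{no4.3a}(ii) then gives $e(J^{[k_0+1]}, \mathbf{I}^{[\mathbf{0}]}; M) = e(\mathbf{x}; \overline{M}) = e(U; M/(\mathbf{x}_{\mathbf{I}})M : I^\infty)$, so (i) holds with equality and (ii) is vacuous. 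For $|\mathbf{k}| > 0$, since $\mathbf{x}_{\mathbf{I}}$ is the concatenation of the blocks $\mathfrak{I}_1, \ldots, \mathfrak{I}_d$ with $|\mathfrak{I}_i| = k_i$, the element $x_1$ lies in the first nonempty block, so without loss of generality $x_1 \in I_1$ and $k_1 > 0$. Corollary \ref{vt5/3}(i) gives
$$e(J^{[k_0+1]}, \mathbf{I}^{[\mathbf{k}]}; M) \le e(J^{[k_0+1]}, \mathbf{I}^{[\mathbf{k} - \mathbf{e}_1]}; M/x_1 M).$$
Now $\mathbf{x}' = x_2, \ldots, x_n$ is a joint reduction of $\mathbf{I}, J$ with respect to $M/x_1 M$ of type $(\mathbf{k} - \mathbf{e}_1, k_0+1)$ (as in the proof of Theorem \ref{le2020}), its $\mathbf{I}$-part $\mathbf{x}'_{\mathbf{I}}$ is $x_2, \ldots, x_{|\mathbf{k}|}$, its $J$-part is the same $U$, and $|\mathbf{k} - \mathbf{e}_1| = |\mathbf{k}| - 1$; moreover $(\mathbf{x}_{\mathbf{I}})M = x_1 M + (\mathbf{x}'_{\mathbf{I}})M$, whence $(M/x_1 M)/(\mathbf{x}'_{\mathbf{I}})(M/x_1 M) : I^\infty = M/(\mathbf{x}_{\mathbf{I}})M : I^\infty$. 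Applying the inductive hypothesis to $M/x_1 M$ therefore yields $e(J^{[k_0+1]}, \mathbf{I}^{[\mathbf{k} - \mathbf{e}_1]}; M/x_1 M) \le e(U; M/(\mathbf{x}_{\mathbf{I}})M : I^\infty)$, and combining this with the previous inequality proves (i).

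For (ii), if $\mathbf{x}_{\mathbf{I}}$ is an $I$-filter-regular sequence with respect to $M$, then $x_1$ is an $I$-filter-regular element with respect to $M$, so Corollary \ref{vt5/3}(iii) makes the first inequality an equality, while $\mathbf{x}'_{\mathbf{I}}$ is an $I$-filter-regular sequence with respect to $M/x_1 M$, so the inductive hypothesis (ii) makes the second one an equality as well; hence (ii). The only points that need care — all routine and already present in the proofs of Theorem \ref{le2020} and Note \ref{note.1} — are the persistence of the joint-reduction structure of the correct type under passage from $M$ to $M/x_1 M$, the identification of $M/(\mathbf{x}_{\mathbf{I}})M : I^\infty$ with the corresponding quotient of $M/x_1 M$, and the observation that the first element of $\mathbf{x}_{\mathbf{I}}$ always belongs to some $I_i$ with $k_i > 0$, so that Corollary \ref{vt5/3} is applicable at each stage; no genuine obstacle is expected.
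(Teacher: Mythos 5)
Your proof is correct and takes essentially the same approach as the paper: the paper also reduces to $e(J^{[k_0+1]}, \mathbf{I}^{[\mathbf{0}]}; M/(\mathbf{x}_{\mathbf{I}})M)$ via Corollary \ref{vt5/3} and then invokes Remark \ref{no4.3a}(ii), the only difference being that the paper tacitly iterates Corollary \ref{vt5/3} while you carry out the iteration explicitly by induction on $|\mathbf{k}|$.
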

\begin{proof}
  Since  ${\bf x}$ is a joint reduction of the type $({\bf
k},k_0+1)$ of
$\mathbf{I}, J$ with respect to $M,$
 it follows that $U$ is a joint reduction  of
$\mathbf{I}, J$ with respect to $M/({\bf x}_{\mathbf{I}})M$ of the type $({\bf
0},k_0+1).$
  Hence $e(J^{[k_0 +1]}, \mathbf{I}^{[\mathbf{0}]}; M/({\bf x}_{\mathbf{I}})M) = e(U; {M}/({\bf x}_{\mathbf{I}})M:I^\infty)$ by
Remark \ref{no4.3a} (ii). Now, by Corollary \ref {vt5/3} (i), $e(J^{[k_0 +1]}, \mathbf{I}^{[\mathbf{k}]}; M) \le e(J^{[k_0 +1]}, \mathbf{I}^{[\mathbf{0}]}; M/({\bf x}_{\mathbf{I}})M).$ So we have (i). If ${\bf x}_{\mathbf{I}}$ is an $I$-filter-regular sequence with respect to $M,$ then
by Corollary \ref {vt5/3} (iii), $e(J^{[k_0 +1]}, \mathbf{I}^{[\mathbf{k}]}; M = e(J^{[k_0 +1]}, \mathbf{I}^{[\mathbf{0}]}; M/({\bf x}_{\mathbf{I}})M).$ Therefore we get (ii).
\end{proof}

Now, we prove following result on the relationship between mixed multiplicities and the Hilbert-Samuel multiplicity of joint reductions.

 \begin{corollary}\label{vt2019}  Let ${\bf x}= x_1, \ldots, x_n$ be  a joint reduction  of
$\mathbf{I}, J$ with respect to $M$
 of the type $({\bf
k},k_0+1)$ with
${\bf x}_{\mathbf{I}} =x_1,\ldots,x_{|{\bf k }|} \subset {\bf I}.$ Assume that ${\bf x}$ is a system of parameters for ${M}.$
Set $I= I_1\cdots I_d.$ Then
\begin{itemize}
\item[$\mathrm{(i)}$] $e(J^{[k_0 +1]}, \mathbf{I}^{[\mathbf{k}]}; M)
\le e(\mathbf{x}; {M}).$
\item[$\mathrm{(ii)}$] $e(J^{[k_0 +1]}, \mathbf{I}^{[\mathbf{k}]}; M)
= e(\mathbf{x}; {M})$ if
 $\dim {M}/({\bf x}_{\mathbf{I}},I){M} < \dim
{M}/({\bf x}_{\mathbf{I}}){M}.$
\end{itemize}
\end{corollary}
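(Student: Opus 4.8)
The plan is to deduce both parts from Corollary \ref{vt5/3} by induction on $|\mathbf{k}|$, running the classical recursion $e(\mathbf{y};N)=e(\mathbf{y}';N/y_1N)-e(\mathbf{y}';0_N:y_1)$ for the multiplicity symbol in parallel with the recursion of Theorem \ref{le2020} for the mixed multiplicity. Since $\mathbf{x}$ is a system of parameters for $M$, the truncation $x_{j+1},\dots,x_n$ is a system of parameters for $M/(x_1,\dots,x_j)M$ for every $j$; in particular $e(\mathbf{x};M)=e((\mathbf{x});M)$, $\dim M/(\mathbf{x}_{\mathbf{I}})M=n-|\mathbf{k}|=k_0+1$, and $x_{|\mathbf{k}|+1},\dots,x_n$ is a system of parameters for $M/(\mathbf{x}_{\mathbf{I}})M$. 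Set $\delta(M)=e(\mathbf{x};M)-e(J^{[k_0+1]},\mathbf{I}^{[\mathbf{k}]};M)$; I will show $\delta(M)\ge 0$ in general, which is part (i), and $\delta(M)=0$ under the hypothesis of (ii). Because a module of dimension $<n$ contributes $0$ both to the $n$-fold multiplicity symbol (cf. \cite{BH1}) and, once $n-1=k_0+|\mathbf{k}|$, to the coefficient $e(J^{[k_0+1]},\mathbf{I}^{[\mathbf{k}']};\,\cdot\,)$ of the relevant top-degree term of its Hilbert polynomial (by \cite[Proposition 3.1]{Vi} and the explicit shape of $P$), the two recursions will be compatible on lower-dimensional pieces; this is the engine that keeps $\delta$ manifestly nonnegative.

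Base case $|\mathbf{k}|=0$. Here $\mathbf{x}\subset J$ is a joint reduction of type $(\mathbf{0},k_0+1)$, so Remark \ref{no4.3a}(ii) gives $e(J^{[k_0+1]},\mathbf{I}^{[\mathbf{0}]};M)=e(\mathbf{x};\overline{M})$ with $\overline{M}=M/0_M:I^\infty$, while additivity of the multiplicity symbol on $0\to 0_M:I^\infty\to M\to\overline{M}\to 0$ gives $e(\mathbf{x};M)=e(\mathbf{x};\overline{M})+e(\mathbf{x};0_M:I^\infty)$; hence $\delta(M)=e(\mathbf{x};0_M:I^\infty)\ge 0$. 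For (ii), from $\operatorname{Supp}(0_M:I^\infty)\subseteq\operatorname{Supp}(M/IM)$ and $\dim M/IM\le\dim M/(\mathbf{x}_{\mathbf{I}},I)M+|\mathbf{k}|$ the hypothesis $\dim M/(\mathbf{x}_{\mathbf{I}},I)M<\dim M/(\mathbf{x}_{\mathbf{I}})M=k_0+1$ forces $\dim(0_M:I^\infty)\le k_0+|\mathbf{k}|=n-1<\dim M$, so $e(\mathbf{x};0_M:I^\infty)=0$ and $\delta(M)=0$.

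Inductive step $|\mathbf{k}|>0$. After reindexing the $I_i$, assume $k_1>0$ and $x_1\in I_1$, and put $\mathbf{x}'=x_2,\dots,x_n$. Subtracting the recursion of Theorem \ref{le2020} from the one for the multiplicity symbol yields
\begin{equation*}
\delta(M)=\delta(M/x_1M)-\Big(e(\mathbf{x}';0_M:x_1)-e\big(J^{[k_0+1]},\mathbf{I}^{[\mathbf{k}-\mathbf{e}_1]};0_M:x_1\big)\Big).
\end{equation*}
The hypotheses of the corollary pass to $M/x_1M$ verbatim (in particular $(M/x_1M)/(\mathbf{x}'_{\mathbf{I}})(M/x_1M)=M/(\mathbf{x}_{\mathbf{I}})M$, and likewise with $I$ adjoined, so the dimension condition of (ii) is the same statement), so by induction $\delta(M/x_1M)\ge 0$, with equality under the hypothesis of (ii). For the correction term, $\mathbf{x}'$ is a joint reduction of $\mathbf{I},J$ with respect to $0_M:x_1$ of type $(\mathbf{k}-\mathbf{e}_1,k_0+1)$ (the submodule statement already used in proving Theorem \ref{le2020}). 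If $\dim(0_M:x_1)<n-1$, then $\mathbf{x}'$ is not a system of parameters for $0_M:x_1$, so $e(\mathbf{x}';0_M:x_1)=0$, and $P(n_0,\mathbf{n},J,\mathbf{I},0_M:x_1)$ has total degree $<n-2=k_0+|\mathbf{k}-\mathbf{e}_1|$, so the other term vanishes as well; hence $\delta(M)=\delta(M/x_1M)$ and both parts follow. If $\dim(0_M:x_1)=n-1$, then $\mathbf{x}'$ is a system of parameters for $0_M:x_1$ and the correction term equals $\delta(0_M:x_1)$, which is $\ge 0$ by the induction hypothesis for (i); combined with the displayed identity, part (i) for $M$ reduces to the inequality $\delta(0_M:x_1)\le\delta(M/x_1M)$, and then part (ii) follows formally: under its hypothesis $\delta(M/x_1M)=0$, so $\delta(M)=-\delta(0_M:x_1)\le 0$, while $\delta(M)\ge 0$ by (i), whence $\delta(M)=0$.

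The main obstacle is exactly the inequality $\delta(0_M:x_1)\le\delta(M/x_1M)$ in the case $\dim(0_M:x_1)=n-1$: one must bound the defect of $0_M:x_1$ by that of $M/x_1M$ using the four-term sequence $0\to 0_M:x_1\to M\xrightarrow{x_1}M\to M/x_1M\to 0$ — split as $0\to(0_M:x_1)\cap x_1M\to 0_M:x_1\to S\to 0$ and $0\to S\to M/x_1M\to (M/x_1M)/S\to 0$ with $S=(0_M:x_1)/\big((0_M:x_1)\cap x_1M\big)$ — together with additivity of both the multiplicity symbol and the fixed-degree coefficient $e(J^{[k_0+1]},\mathbf{I}^{[\mathbf{k}-\mathbf{e}_1]};\,\cdot\,)$ on these sequences, and a secondary induction on $\dim(0_M:x_1)$. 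Everything else is the bookkeeping that keeps the recursion of Theorem \ref{le2020} aligned with the recursion for the multiplicity symbol, so that $\delta$ is visibly a sum of multiplicity symbols of $I$-torsion submodules — which, under the hypothesis of (ii), are all of dimension $<n$ and therefore contribute nothing.
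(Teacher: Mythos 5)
Your approach---defining $\delta(M)=e(\mathbf{x};M)-e(J^{[k_0+1]},\mathbf{I}^{[\mathbf{k}]};M)$ and subtracting Theorem \ref{le2020} from the recursion for the multiplicity symbol---leads to a genuine gap that you flag yourself. When $\dim(0_M:x_1)=n-1$ the recursion gives $\delta(M)=\delta(M/x_1M)-\delta(0_M:x_1)$, and the inductive hypothesis tells you only that both right-hand terms are nonnegative; nothing in the recursion itself yields $\delta(0_M:x_1)\le\delta(M/x_1M)$, which is exactly what part (i) requires. Your sketch for closing this (splitting $0\to 0_M:x_1\to M\xrightarrow{\;x_1\;}M\to M/x_1M\to 0$ through $S=(0_M:x_1)/\big((0_M:x_1)\cap x_1M\big)$) does not make progress in general: in the extreme case $0_M:x_1\subseteq x_1M$ one has $S=0$, the two short exact sequences degenerate, and your ``secondary induction on $\dim(0_M:x_1)$'' never advances. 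It is also not justified that $e(J^{[k_0+1]},\mathbf{I}^{[\mathbf{k}-\mathbf{e}_1]};\cdot)$ is additive across those sequences, since for this coefficient even to be the mixed multiplicity of maximal degrees one must verify the joint-reduction-of-the-right-type hypothesis on each intermediate subquotient. Finally, your derivation of (ii) invokes (i), so the gap propagates.

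The paper avoids the $0_M:x_1$ term entirely by a structurally different step: it first applies the associativity formula of \cite[Corollary 4.6]{TV4}, writing $e(J^{[k_0+1]},\mathbf{I}^{[\mathbf{k}]};M)=\sum_{\mathfrak{p}\in\Pi}\ell_A(M_{\mathfrak{p}})\,e(J^{[k_0+1]},\mathbf{I}^{[\mathbf{k}]};A/\mathfrak{p})$ with $\Pi$ the top-dimensional minimal primes of $M$, and likewise for $e(\mathbf{x};M)$. Since $\mathbf{x}$ is a system of parameters, $x_1\notin\mathfrak{p}$ for every $\mathfrak{p}\in\Pi$, so $x_1$ is a nonzerodivisor on $A/\mathfrak{p}$; Corollary \ref{vt5/3}(ii) then gives $e(J^{[k_0+1]},\mathbf{I}^{[\mathbf{k}]};A/\mathfrak{p})=e(J^{[k_0+1]},\mathbf{I}^{[\mathbf{k}-\mathbf{e}_1]};A/(x_1,\mathfrak{p}))$ with no torsion correction, and the inductive hypothesis combined with $e(x_2,\ldots,x_n;A/(x_1,\mathfrak{p}))=e(x_1,\ldots,x_n;A/\mathfrak{p})$ finishes both (i) and (ii). Reducing to prime quotients before running the recursion is the missing idea in your write-up.
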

\begin{proof} We prove this theorem  by induction
 on $|{\bf k}|.$
If $|{\bf k}| = 0$, then since $\bf x$ is a joint reduction of $
\mathbf{I}, J$ with respect to $M$ of the type $({\bf 0}, k_0+1)$,
by Remark \ref{no4.3a} (ii) we have
  $e(J^{[k_0+1]},
\mathbf{I}^{[\mathbf{0}]}; M) = e(\mathbf{x};{M}/0_M:I^\infty).$ Hence since $e(\mathbf{x};{M}/0_M:I^\infty) \le e(\mathbf{x};M),$ we obtain
$e(J^{[k_0+1]},\mathrm{\bf I}^{[\mathrm{\bf 0}]}; M) \le e(\mathbf{x}; {M}).$
And if $\dim M/IM < \dim M,$ then it is clear that $\dim (0_M:I^\infty) < \dim M.$
Then since the exact sequence of $A$-modules:
$$0\longrightarrow (0_M:I^\infty) \longrightarrow M \longrightarrow M/0_M:I^\infty\longrightarrow 0,$$ it follows that
$e(\mathbf{x};{M}/0_M:I^\infty) = e(\mathbf{x};M).$ Thus, $e(J^{[k_0+1]},\mathrm{\bf I}^{[\mathrm{\bf 0}]}; M) = e(\mathbf{x}; {M}).$
 We get the proof in the case $|\mathbf{k}| = 0.$
 Consider the case that $|\mathbf{k}| >0.$ And without loss of generality, assume that
$k_1>0$ and $x_1\in I_1.$
Set $$\Pi = \{\frak p \in \mathrm{Min}{M}\mid \dim A/\frak p = \dim
{M}\}.$$ By \cite[Corollary 4.6]{TV4} (see \cite[Theorem 3.2]{VT1}), we have
$$e(J^{[k_0+1]},\mathrm{\bf I}^{[\mathrm{\bf k}]}; M)
=\sum_{\mathfrak{p}\in
\Pi}\ell_A(M_{\mathfrak{p}})e(J^{[k_0+1]},\mathrm{\bf
I}^{[\mathrm{\bf k}]}; A/\mathfrak{p}).$$
Note that since ${\bf x}= x_1, \ldots, x_n$ is  a joint reduction  of
$\mathbf{I}, J$ with respect to $M$
 of the type $({\bf
k},k_0+1),$ it follows that ${\bf x}$ is also a joint reduction of $\mathbf{I}, J$ with respect to $A/\frak p$ of the type $({\bf
k},k_0+1)$ for all
$\mathfrak{p}\in
\Pi$ (see e.g \cite[Lemma 17.1.4]{SH}). Therefore $x_2,\ldots,x_n$ is a joint reduction of $\mathbf{I}, J$ with respect to $A/(x_1,\mathfrak{p})$
 of the type $({\bf
k-e}_1,k_0+1)$ for all
$\mathfrak{p}\in
\Pi.$ And since ${\bf x}$ is a system of parameters for ${M},$ it follows that  ${\bf x}$ is a system of parameters for $A/\frak p$ for all
$\mathfrak{p}\in
\Pi.$ Hence $x_2,\ldots,x_n$ is a system of parameters for $A/(x_1,\mathfrak{p})$ for all
$\mathfrak{p}\in
\Pi.$ Now since
 $x_1$ is an $A/\frak p$-regular element for all
$\mathfrak{p}\in
\Pi,$ it follows by Corollary \ref {vt5/3} (ii),
 that
  $$e(J^{[k_0+1]},\mathrm{\bf
I}^{[\mathrm{\bf k}]}; A/\mathfrak{p})=e(J^{[k_0+1]},
\mathbf{I}^{[\mathbf{k-e}_1]}; A/(x_1,\mathfrak{p}))$$ for all
$\mathfrak{p}\in \Pi.$
Then by the
inductive hypothesis,
$$e(J^{[k_0+1]},
\mathbf{I}^{[\mathbf{k-e}_1]}; A/(x_1,\mathfrak{p}))
 \le e(x_2,\ldots,x_n;A/(x_1,\mathfrak{p}))$$
 for all
$\mathfrak{p}\in \Pi.$
Now, since $x_1$ is not  a  zero divisor of $A/\mathfrak{p},$ it follows that $$e(x_2,\ldots,x_n; A/(x_1,\mathfrak{p})) = e(x_1,x_2,\ldots,x_n; A/\mathfrak{p})$$
for all
$\mathfrak{p}\in \Pi$ by \cite[Page 641, lines 27-28, (D)]{AB}. So $$e(J^{[k_0+1]},\mathrm{\bf
I}^{[\mathrm{\bf k}]}; A/\mathfrak{p}) \le e(x_1,x_2,\ldots,x_n; A/\mathfrak{p})$$ for all
$\mathfrak{p}\in \Pi.$ Hence
$$\sum_{\mathfrak{p}\in
\Pi}\ell_A(M_{\mathfrak{p}})e(J^{[k_0+1]},\mathrm{\bf
I}^{[\mathrm{\bf k}]}; A/\mathfrak{p}) \le \sum_{\mathfrak{p}\in
\Pi}\ell_A(M_{\mathfrak{p}})e(x_1,x_2,\ldots,x_n; A/\mathfrak{p}).$$
Recall that $\sum_{\mathfrak{p}\in
\Pi}\ell_A(M_{\mathfrak{p}})e(x_1,x_2,\ldots,x_n; A/\mathfrak{p}) = e(x_1,x_2,\ldots,x_n; M)$
(see e.g.\cite[Theorem 11.2.4]{SH}). Thus, $e(J^{[k_0 +1]}, \mathbf{I}^{[\mathbf{k}]}; M)
\le e(\mathbf{x}; {M}).$

Next, if $\dim {M}/({\bf x}_{\mathbf{I}},I){M} < \dim {M}/({\bf x}_{\mathbf{I}}){M},$ then there exists $b\in I$ such that $$\dim {M}/({\bf x}_{\mathbf{I}}, b){M} = \dim {M}/({\bf x}_{\mathbf{I}}){M}-1.$$ Hence since ${\bf x}_{\mathbf{I}}$ is part of a system of parameters for ${M},$ it follows that ${\bf x}_{\mathbf{I}},b$ is part of a system of parameters for ${M}.$  Consequently, ${\bf x}_{\mathbf{I}},b$ is part of a system of parameters for $A/\frak p$ for all
$\mathfrak{p}\in \Pi.$ Set $B= A/(x_1,\mathfrak{p}).$ Then $x_2,\ldots,x_{|{\bf k }|},b$ is part of a system of parameters for $B.$
And we get $\dim B/(x_2,\ldots,x_{|{\bf k }|},b)B = \dim B - |\bf k|.$ So
$$\dim B/(x_2,\ldots,x_{|{\bf k }|},I)B < \dim B - |{\bf k-e}_1| = \dim B/(x_2,\ldots,x_{|{\bf k }|})B \mathrm{,i.e.,}$$
$$\dim \dfrac{[A/(x_1,\frak p)]}{(x_2,\ldots,x_{|{\bf k }|}, I)[A/(x_1,\frak p)]}
 < \dim \dfrac{[A/(x_1,\frak p)]}{(x_2,\ldots,x_{|{\bf k }|})[A/(x_1,\frak p)]}$$
 for all $\mathfrak{p}\in \Pi.$ Hence by the
inductive hypothesis,
$$e(J^{[k_0+1]},
\mathbf{I}^{[\mathbf{k-e}_1]}; A/(x_1,\mathfrak{p}))
 = e(x_2,\ldots,x_n;A/(x_1,\mathfrak{p}))$$
 for all
$\mathfrak{p}\in \Pi.$ Consequently,
$e(J^{[k_0+1]},\mathrm{\bf
I}^{[\mathrm{\bf k}]}; A/\mathfrak{p}) = e(x_1,x_2,\ldots,x_n; A/\mathfrak{p})$ for all
$\mathfrak{p}\in \Pi.$ Thus,  $e(J^{[k_0 +1]}, \mathbf{I}^{[\mathbf{k}]}; M)
= e(\mathbf{x}; {M}).$
 \end{proof}

\begin{remark}\label{no4.3b} Let ${\bf x}= x_1, \ldots, x_n$ be  a joint reduction  of
$\mathbf{I}, J$ with respect to $M$
 of the type $({\bf k},k_0+1).$ \cite[Lemma 3.2]{TV3} has stated that if $\dim M/IM < \dim M -|\mathbf{k}|,$ then
 ${\bf x}$ is a system of parameters for ${M}.$ However, the argument in the proof of this lemma in \cite{TV3} is incorrect, but up to
 the present we have not overcome yet.
 Fortunately, \cite[Theorem 3.3]{TV3} still holds if
 further assumption that the joint reduction is a system of parameters for $M.$
 Therefore, Corollary \ref{vt2019} (ii) covers also \cite[Theorem 3.3]{TV3} because that if $\dim {M}/I{M} < \dim {M} -|{\bf k}|,$ then $\dim {M}/({\bf x}_{\mathbf{I}},I){M} < \dim
{M}/({\bf x}_{\mathbf{I}}){M}.$
\end{remark}

Returning now to  \cite[Theorem 3.1]{VDT},
this theorem is an extension of  Rees's mixed multiplicity theorem to
the case that the ideal $I$ have
height larger than $|\mathbf{k}|$ and the joint reduction ${\bf x}$ is a system of parameters for ${M}.$
Developing this result is one of motivations leads us to the following result.

\begin{corollary}\label{cr2019}  Let ${\bf x}= x_1, \ldots, x_n$ be  a joint reduction  of
$\mathbf{I}, J$ with respect to $M$
 of the type $({\bf
k},k_0+1)$ with ${\bf x}_{\mathbf{I}} =x_1,\ldots,x_{|{\bf k }|} \subset {\bf I}.$ Assume that $\mathrm{ht}\frac{\mathrm{Ann}[M/({\bf x}_{\mathbf{I}})M]+I}{\mathrm{Ann}[M/({\bf x}_{\mathbf{I}})M]} > 0$ and $k_0 + |{\bf k}| = \dim \overline{M}-1.$
Then
${\bf x}$ is a system of parameters for ${M}$ and $$e(J^{[k_0 +1]}, \mathbf{I}^{[\mathbf{k}]}; M)
= e(\mathbf{x}; {M}).$$
\end{corollary}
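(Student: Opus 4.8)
The plan is to reduce everything to Corollary~\ref{vt2019}. Write $N=M/({\bf x}_{\bf I})M$, put $n=|{\bf x}|=|{\bf k}|+k_0+1$, and let $U=x_{|{\bf k}|+1},\ldots,x_n$ be the part of ${\bf x}$ lying in $J$, so that $|U|=k_0+1$ and, exactly as in the proof of Corollary~\ref{vt15/7}, $U$ is a joint reduction of ${\bf I},J$ with respect to $N$ of the type $({\bf 0},k_0+1)$.

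First I would record what the height hypothesis says about $N$. The condition $\mathrm{ht}\frac{\mathrm{Ann}N+I}{\mathrm{Ann}N}>0$ is equivalent to: $I\not\subseteq\frak p$ for every $\frak p\in\mathrm{Min}_A N$. From this I extract two facts. (a) $\dim N/IN<\dim N$ (for this only the top-dimensional minimal primes matter). (b) $\mathrm{Supp}_A(I^mN)=\mathrm{Supp}_A N$ for all $m\ge 1$: given $\frak p\in\mathrm{Supp}_A N$, choose $\frak q\in\mathrm{Min}_A N$ with $\frak q\subseteq\frak p$; since $I\not\subseteq\frak q$ we get $I_{\frak p}\not\subseteq\sqrt{\mathrm{Ann}_{A_{\frak p}}N_{\frak p}}$, hence $I^mN_{\frak p}\ne 0$ for every $m$, so $\frak p\in\mathrm{Supp}(I^mN)$.

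Next I would prove that ${\bf x}$ is a system of parameters for $M$. Since $U$ is a joint reduction of ${\bf I},J$ with respect to $N$ of the type $({\bf 0},k_0+1)$ and $J$ is $\frak m$-primary, the argument of Remark~\ref{no4.3a}(i) applied to $N$ and $U$ shows $(U)$ is an ideal of definition of $I^mN$ for all large $m$; in particular $\ell_A(I^mN/(U)I^mN)<\infty$, i.e. $\mathrm{Supp}(I^mN)\cap V(U)\subseteq\{\frak m\}$. Combining with (b), $\mathrm{Supp}(N/(U)N)=\mathrm{Supp}(N)\cap V(U)=\mathrm{Supp}(I^mN)\cap V(U)\subseteq\{\frak m\}$, so $\ell_A(M/({\bf x})M)=\ell_A(N/(U)N)<\infty$ and hence $\dim M\le|{\bf x}|=n$ by Krull's altitude theorem. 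Since also $\dim M\ge\dim\overline{M}=k_0+|{\bf k}|+1=n$, we conclude $\dim M=n=|{\bf x}|$, so ${\bf x}$ is a system of parameters for $M$.

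Finally, ${\bf x}$ satisfies every hypothesis of Corollary~\ref{vt2019}: it is a joint reduction of ${\bf I},J$ with respect to $M$ of the type $({\bf k},k_0+1)$ with ${\bf x}_{\bf I}\subset{\bf I}$, it is a system of parameters for $M$, and $\dim M/({\bf x}_{\bf I},I)M=\dim N/IN<\dim N=\dim M/({\bf x}_{\bf I})M$ by (a); so Corollary~\ref{vt2019}(ii) gives $e(J^{[k_0+1]},{\bf I}^{[{\bf k}]};M)=e({\bf x};M)$. I expect the step ``${\bf x}$ is a system of parameters'' to be the main obstacle: the naive bound that uses only $\dim N/IN<\dim N$ yields merely $\dim M/({\bf x})M\le k_0$, and it is precisely the stronger statement ``$I$ lies in no minimal prime of $N$'' — through the support identity (b), which lets one transport the $\frak m$-primariness of $(U)+\mathrm{Ann}(I^mN)$ back to $(U)+\mathrm{Ann}(N)$ — that closes the gap; this is also the point where the flawed lemma recalled in Remark~\ref{no4.3b} breaks down.
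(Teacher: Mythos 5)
Your proof is correct and follows the same overall strategy as the paper: show that $\mathbf{x}$ is a system of parameters for $M$, check that $\dim M/(\mathbf{x}_{\mathbf{I}},I)M < \dim M/(\mathbf{x}_{\mathbf{I}})M$, and invoke Corollary~\ref{vt2019}(ii). Where you depart from the paper is in the organization of the system-of-parameters step. The paper first converts the height hypothesis into the equality $\mathrm{Min}[M/(\mathbf{x}_{\mathbf{I}})M]=\mathrm{Min}[M/(\mathbf{x}_{\mathbf{I}})M:I^\infty]$, runs a squeeze of inequalities around $\dim M/(\mathbf{x}_{\mathbf{I}})M$ to get $\dim M/(\mathbf{x}_{\mathbf{I}})M=\dim M-|\mathbf{k}|$ (so $\mathbf{x}_{\mathbf{I}}$ is already part of a system of parameters), and then separately checks that every minimal prime of $M/(\mathbf{x})M$ is $\mathfrak{m}$ by lifting to $\mathrm{Min}[M/(\mathbf{x}_{\mathbf{I}})M:I^\infty+(U)M]$. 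You instead package the height hypothesis as the support identity $\mathrm{Supp}(I^mN)=\mathrm{Supp}(N)$ for $N=M/(\mathbf{x}_{\mathbf{I}})M$, transport the $\mathfrak{m}$-primariness of $(U)+\mathrm{Ann}(I^mN)$ straight back to $(U)+\mathrm{Ann}(N)$, and then close with the two-sided bound $\dim\overline{M}\le\dim M\le n$. Both routes use exactly the same two ingredients — that $I$ avoids every minimal prime of $N$, and that $(U)$ is an ideal of definition of the $I$-saturation of $N$ via Remark~\ref{no4.3a}(i) — so they are mathematically equivalent; yours is a bit more streamlined, while the paper's records the intermediate fact that $\mathbf{x}_{\mathbf{I}}$ by itself is part of a system of parameters for $M$. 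Your concluding comment correctly identifies where the weaker ``naive'' dimension bound fails and why the support identity closes the gap, which is indeed the crux exposed in Remark~\ref{no4.3b}.
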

\begin{proof} First, we prove that ${\bf x}$ is a system of parameters for ${M}.$
Set $U = x_{|\mathbf{k}|+1},\ldots,x_n.$ Since  ${\bf x}$ is a joint reduction  of
$\mathbf{I}, J$ with respect to $M$
 of the type $({\bf
k},k_0+1),$ it follows that $U$ is a joint reduction  of
$\mathbf{I}, J$ with respect to $M/({\bf x}_{\mathbf{I}})M$
 of the type $({\bf
0},k_0+1).$ Hence by Remark \ref{no4.3a} (i), $(U)$ is an ideal of
definition of $M/({\bf x}_{\mathbf{I}})M:I^\infty.$
Since $$\mathrm{ht}\frac{\mathrm{Ann}[M/({\bf x}_{\mathbf{I}})M]+I}{\mathrm{Ann}[M/({\bf x}_{\mathbf{I}})M]} > 0,$$ it follows that $I \nsubseteq \frak p$ for any $\frak p \in \mathrm{Min}[{M}/({\bf x}_{\mathbf{I}}){M})].$ Consequently  we get
   \begin{equation}\label{vv21}\mathrm{Min}[{M}/({\bf x}_{\mathbf{I}}){M}]=\mathrm{Min}[{M}/({\bf x}_{\mathbf{I}}){M}:I^\infty]. \end{equation}
 So $\dim
{M}/({\bf x}_{\mathbf{I}}){M}=\dim {M}/({\bf x}_{\mathbf{I}}){M}:I^\infty.$
    Note that since  $(U)$ is an ideal of
definition of $M/({\bf x}_{\mathbf{I}})M:I^\infty,$ we get
        $$\dim {M}/({\bf x}_{\mathbf{I}}){M}:I^\infty \le |U| = k_0+1= \dim \overline{M} - |{\bf k}| .$$   Therefore, we obtain
$$\dim {M} - |{\bf k}| \le \dim
{M}/({\bf x}_{\mathbf{I}}){M}=\dim {M}/({\bf x}_{\mathbf{I}}){M}:I^\infty \le \dim \overline{M} - |{\bf k}|\le \dim {M} - |\bf k|.
$$ Thus  $\dim
{M}/({\bf x}_{\mathbf{I}}){M}=\dim {M} - |\bf k|,$ and then
  ${\bf x}_{\mathbf{I}}$ is part of a system of parameters for ${M}.$ Now, since
    $(U)$ is an ideal of
definition of $M/({\bf x}_{\mathbf{I}})M:I^\infty,$
 we have
   $\mathrm{Min}[{M}/({\bf x}_{\mathbf{I}}){M}:I^\infty + (U)M] = \{\frak m \}.$
      Note that any $ P \in \mathrm{Min}[{M}/({\bf x}_{\mathbf{I}}, U){M}],$ there exists $\frak p \in \mathrm{Min}[{M}/({\bf x}_{\mathbf{I}}){M}]$ such that $\frak p \subset P.$ And by (\ref{vv21}), $\frak p \in \mathrm{Min}[{M}/({\bf x}_{\mathbf{I}}){M}:I^\infty].$ Hence
  $P \in \mathrm{Min}[{M}/({\bf x}_{\mathbf{I}}){M}:I^\infty + (U)M].$ Consequently, $P = \frak m.$
  Therefore, ${\bf x}= {\bf x}_{\mathbf{I}}, U $ is a system of parameters for ${M}.$

  Next, since $\mathrm{ht}\frac{\mathrm{Ann}[M/({\bf x}_{\mathbf{I}})M]+I}{\mathrm{Ann}[M/({\bf x}_{\mathbf{I}})M]} > 0,$ we have
 $$\dim {M}/({\bf x}_{\mathbf{I}},I){M} < \dim
{M}/({\bf x}_{\mathbf{I}}){M}.$$ Hence $e(J^{[k_0 +1]}, \mathbf{I}^{[\mathbf{k}]}; M)
= e(\mathbf{x}; {M})$ by Corollary \ref{vt2019} (ii).
\end{proof}

Corollary \ref{cr2019} not only
replaces the condition on the height of $I$
by the weaker condition, but also removes the hypothesis that
 the joint reduction is a systems of parameters for ${M}$ in \cite[Theorem 3.1]{VDT}.
    Corollary \ref{vt2019} and Corollary \ref{cr2019} seem to make the
problem of expressing mixed multiplicities into the Hilbert-Samuel
multiplicity of joint reductions become clear.

Next, to understand more the correlation between the conditions in
 Corollary \ref{vt2019}, and to consider the relationship between multiplicities of joint reductions of the same type, we would like to give the  following example.
\begin{example}\label{exam} Let $(A, \frak m)$ be a Noetherian local ring of dimension $d=4$. Assume that $x_1, x_2, x_3, x_4$ is a  system of parameters for  $A$. Set $I_1= (x_1, x_2, x_3)$, $I_2 = (x_3),$ $J = (x_1, x_2, x_3, x_4)$, ${\bf I}=I_1, I_2$ and $I=I_1I_2$. Then
 for all $n_1, n_2, n_0 \ge 1,$ we have
$$I_1^{n_1}I_2^{n_2}J^{n_0} = x_3I_1^{n_1}I_2^{n_2-1}J^{n_0} + (x_1,x_2,x_4)I_1^{n_1}I_2^{n_2}J^{n_0-1}.$$
  So ${\bf x}= x_3, x_1, x_2, x_4$ is a joint reduction  of ${\bf I}, J$ with respect to $A$ of the type $(0, 1, 3).$ In this case,
  ${\bf x_I} = x_3,$ ${\bf k} = (0, 1)$ and
     $\dim A/({\bf x_I}, I)= \dim A/({\bf x_I}).$ Hence the dimensional condition of Corollary \ref{vt2019} (ii) is not satisfied.
     Note that ${\bf x}$ is a  system of parameters for  $A.$
          Now,
    we consider the relationship between $e(J^{[3]}, \mathbf{I}^{[\mathbf{k}]}; A)$ and $e({\bf x}; A).$
                Since $0_A: x_3 \subset 0_A: I^\infty,$ it follows that $x_3$ is an $I$-filter-regular element with respect to $A.$
           Then by Corollary \ref {vt5/3} (iii) we have
   $e(J^{[3]}, \mathbf{I}^{[\mathbf{k}]}; A) = e(J^{[3]}, \mathbf{I}^{[\mathbf{0}]}; A/(x_3)).$ But since $I(A/(x_3)) = 0$, it follows that $e(J^{[3]}, \mathbf{I}^{[\mathbf{0}]}; A/(x_3))=0.$ Therefore, we get  $e(J^{[3]}, \mathbf{I}^{[\mathbf{k}]}; A) = 0 < e({\bf x}; A).$
   Next, note that
   ${\bf z} = x_3, x_1^2, x_2^2, x_4^2$ is also a joint reduction  of ${\bf I}, J$ with respect to $A$ of the type $(0, 1, 3).$ So ${\bf x}$ and ${\bf z}$ are joint reductions of the type $(0, 1, 3)$ and are systems of parameters
    for  $A.$ However,
   $e({\bf x}; A) \not= e({\bf z}; A)$. This is also a remarkable fact.
  \end{example}

Finally,  we would like to discuss some facts related to Corollary \ref {vt5/3} (iii).

Recall that an element $a \in I_i $ is called a {\it  Rees superficial element} of $\mathrm{\bf I}$ with respect to  $M$ if
 $aM \bigcap \mathbb{I}^{\mathrm{\bf n}}I_iM = a\mathbb{I}^{\mathrm{\bf n}}M$ for all
 $ \mathrm{\bf n}\gg \bf 0,$ and the following.

 \begin{definition}[\cite{Vi}(see e.g. \cite{DMT, MV, DQV, VDT})] \label{dn23} An element $x \in A$ is called a
{\it weak}-(FC)-{\it element} of $\bf I$ with respect to $M$ if
there exists $1 \le i \le d$ such that $x \in I_i$ and
the following conditions are satisfied:
 \begin{enumerate}[(FC1):]
 \item $x$ is a  Rees superficial element of $\mathrm{\bf I}$ with respect to  $M.$

\item $x$ is an $I$-filter-regular element with respect to $M,$
i.e.,\;$0_M:x \subseteq 0_M: I^{\infty}.$
 \end{enumerate}
 \end{definition}

  If ${\bf x}$ is a weak-$\mathrm{(FC)}$-sequence of $\mathrm{\bf I}, J$ with respect to $M$ of the type $(\mathrm{\bf k}, k_0),$ then
\begin{equation}\label{vtttvv}\bigtriangleup^{(k_0,\;\mathrm{\bf
k})}P(n_0, {\bf n}, J,  \mathbf{I}, M)= P(n_0, {\bf n}, J,  \mathbf{I}, M/({\bf x})M) \end{equation} by \cite[Proposition 2.5]{TV4} (see e.g. \cite [(3)]{DV} or the proof of \cite[Proposition 3.3 (i)]{MV}).

Let $\frak I_i$ be a sequence  of  elements of $I_i$
for all $1 \le i \le d$. Assume that $\frak I_1, \ldots, \frak
 I_d$ is a Rees superficial sequence of $\mathbf{I}$ with respect
to $M$. Then by \cite[Theorem
 3.4 (i)]{Vi4}(see \cite[Note 2.4 (1)]{TV4} or \cite[Proposition 2.6 (1)] {VDT}),  for all large $\bf n$, we have
\begin{equation}\label{vttt1}(\frak I_1, \ldots, \frak
 I_d)M \bigcap \mathbb{I}^{\mathbf{n}}{M}
=  \sum_{i=1}^d(\frak I_i)
\mathbb{I}^{\mathbf{n} - \mathbf{e}_i}M. \end{equation}
Now, if $\mathbb{I}^{\mathbf{n}}{M} \subseteq (\frak I_1, \ldots, \frak
 I_d)M$ for all large $\bf n,$ then $(\frak I_1, \ldots, \frak
 I_d)M \bigcap \mathbb{I}^{\mathbf{n}}{M} =\mathbb{I}^{\mathbf{n}}{M}$ for all large $\bf n.$ In this case,
$\mathbb{I}^{\mathbf{n}}{M}
=  \sum_{i=1}^d(\frak I_i)
\mathbb{I}^{\mathbf{n} - \mathbf{e}_i}M$ for all large $\bf n,$ i.e., $\frak I_1, \ldots, \frak
 I_d$ is a joint reduction  of
$\mathbf{I}$ with respect to $M.$

 A sequence ${\bf a}$ in $\mathrm{\bf I}, J$
     is called a {\it mixed  multiplicity system of $\mathrm{\bf I}, J$ with respect to $M$ of the type $(\mathrm{\bf k}, k_0)$} if
${\bf a}$ is a  Rees superficial sequence of $\mathrm{\bf I}, J$ with respect to $M$ of the type $(\mathrm{\bf k}, k_0)$ and
$\dim {M}/({\bf a})M: I^\infty\le 1$ \cite[Definition 4.4] {VT3}.

 Let $\bf a$ be a mixed  multiplicity system of $\mathrm{\bf I}, J$ with respect to $M$ of the type $(\mathrm{\bf k}, k_0).$ Then by \cite[Proposition 2.3]{VDT} (see \cite[Remark 1]{Vi}, \cite{DMT, MV, DQV}), there exists a
weak-$\mathrm{(FC)}$-element $a \in J$ of
${\bf I}, J$ with respect to $M/({\bf a})M.$
So ${\bf a}, a$ is a Rees superficial sequence of $\mathrm{\bf I}, J$ with respect to  $M,$ and
by \cite [Proposition 2.5]{TV4},
$\dim M/({\bf a}, a)M: I^\infty \le \dim M/({\bf a})M: I^\infty -1.$ Since $\bf a$ is a mixed  multiplicity system, we get $\dim {M}/{({\bf a})M: I^\infty}\le 1.$ So $\dim M/({\bf a}, a)M: I^\infty \le 0.$ Hence by Remark \ref{no4.3a} (iii),
 $I \subseteq \sqrt{\mathrm{Ann}(M/({\bf a}, a)M)}.$ Therefore, $J^{n_0}\mathbb{I}^{\mathbf{n}}M \subseteq ({\bf a}, a)M$ for all large $n_0, \bf n.$  Consequently, by (\ref{vttt1}), ${\bf a}, a$
is a joint reduction.
We have the following note.
\begin{note}\label{note.a} Any mixed  multiplicity system of $\mathrm{\bf I}, J$ with respect to $M$ of the type $(\mathrm{\bf k}, k_0)$ in \cite {htv, VT3}
is part of a joint reduction  of
$\mathbf{I}, J$ with respect to $M$
 of the type $({\bf
k},k_0+1).$
\end{note}

Assume that $e(J^{[k_0 +1]}, \mathbf{I}^{[\mathbf{k}]}; M)$ is defined, and
 let
 ${\bf x}$ be a weak-$\mathrm{(FC)}$-sequence of $\mathrm{\bf I}, J$ with respect to $M$ of the type $(\mathrm{\bf k}, k_0+1).$ Then $\bigtriangleup^{(k_0+1,\;\mathrm{\bf
k})}P(n_0, {\bf n}, J,  \mathbf{I}, M)= 0$ by (\ref{vv25}) and
 $\bigtriangleup^{(k_0+1,\;\mathrm{\bf
k})}P(n_0, {\bf n}, J,  \mathbf{I}, M)= P(n_0, {\bf n}, J,  \mathbf{I}, M/({\bf x})M)$ by (\ref{vtttvv}).
So $$P(n_0, {\bf n}, J,  \mathbf{I}, M/({\bf x})M) =0.$$
In this case, $J^{n_0}\mathbb{I}^{\mathbf{n}}M \subseteq (\mathbf{x})M$ for all large $n_0, \bf n.$
Hence by (\ref{vttt1}), ${\bf x}$
is a joint reduction  (see \cite[Remark 2.5 (iv)]{TV3}. So we obtain the following.
\begin{note} \label{note.b} If $e(J^{[k_0 +1]}, \mathbf{I}^{[\mathbf{k}]}; M)$ is defined, then any weak-$\mathrm{(FC)}$-sequence of $\mathrm{\bf I}, J$ with respect to $M$ of the type $(\mathrm{\bf k}, k_0+1)$ is a joint reduction  of
$\mathbf{I}, J$ with respect to $M$
 of the type $({\bf
k},k_0+1).$
\end{note}

By the facts just mentioned, we would like to give the following conclusions.

\begin{remark}\label{no4.2a}
In studying mixed multiplicities, one always needs  to select
an element $x \in I_i$ such that
\begin{equation}\label{07/3}
e(J^{[k_0+1]}, \mathbf{I}^{[\mathbf{k}]}; M)=
e(J^{[k_0+1]}, \mathbf{I}^{[\mathrm{\bf k} - \mathbf{e}_i]};
 M/xM).\end{equation}
  And one of the approaches to this problem is to choose $x \in I_i$ in order to generate the equation
 $P(n_0, {\bf n}, J, \mathbf{I}, M/xM)=\bigtriangleup^{(0,\; \mathbf{e}_i)}P(n_0, {\bf n}, J,
\mathbf{I},  M).$ So
  one used different sequences: Risler and Teissier in 1973 \cite{Te} used superficial sequences of $\frak m$-primary ideals; Viet in 2000 \cite{Vi} used  weak-(FC)-sequences; Trung in 2001 \cite{Tr2} used $''$bi-filter-regular sequences$''$;  Trung and Verma in 2007 \cite{TV} used $(\varepsilon_1,\ldots,\varepsilon_m)$-superficial sequences. However, \cite[Remark 3.8]{DMT} showed that the sequences used in \cite{Te, Tr2, TV} are weak-(FC)-sequences, i.e.,
sequences are both filter-regular sequences and Rees superficial sequences.
In this paper, by a different approach, to achieve (\ref{07/3}),
 we only use filter-regular sequences which are joint reductions (Corollary \ref {vt5/3} (iii)). Moreover, by Note \ref {note.a} and Note \ref {note.b}, the sequences used in \cite {Re, Te, htv, Tr2, TV, Vi,  VT3} are parts of joint reductions. Hence applying Theorem \ref{le2020}, one can easily recover early results on mixed multiplicities of ideals
in \cite {Re, Te, htv, Tr2, TV, Vi,  VT3}.
\end{remark}

%----------------------------------------------------------------------------------------------------

\end{document}